\theoremstyle{plain}
\newtheorem{thm}{Theorem}
\newtheorem{prop}{Proposition}
\newcommand{\ba}[1]{\begin{array}{#1}}
\newcommand{\ea}{\end{array}}
\begin{document}

     \title{\textbf{Global Asymptotic Stability for General Linear MIMO Distributed Systems: An Approach Based on Robust-Adaptive Controllers}}
\author[1]{Daniel O'Keeffe\thanks{Research is supported by the Irish Research Council enterprise partnership scheme (Award No. R16920) in collaboration with University College Cork, Ireland and United Technologies Research Centre Ireland Ltd.}\thanks{Email: \tt\small{danielokeeffe@umail.ucc.ie}; Corresponding author}}
\author[2]{Stefano Riverso\thanks{Email: \tt\small RiversS@utrc.ucc.com}}
\author[2]{Laura Albiol-Tendillo\thanks{Email: \tt\small AlbiolL@utrc.ucc.com}}
\author[1,3]{Gordon Lightbody\thanks{Email: \tt\small g.lightbody@ucc.ie}}
\affil[1]{\textnormal{Control \& Intelligent Systems Group, School of Engineering,
University College of Cork, Ireland}}
\affil[2]{\textnormal{United Technologies Research Centre Ireland Ltd, 4th Floor Penrose Business Centre, Cork, Ireland}}
\affil[3]{\textnormal{MaREI-SFI Research Centre, University College Cork, Ireland}}

     \date{\textbf{Technical Report}\\ January, 2018}

     \maketitle

     \begin{abstract}
       Stability is a critical feature of distributed linear multi-input-multi-output systems. Global asymptotic stability usually can be guaranteed when using decentralised or distributed control architectures, if: (i) conservative controllers are designed, (ii) collective stability conditions are satisfied, or (iii) interaction terms are neutral. 
This paper extends the collective stability method to incorporate adaptive controllers, and shows that this method is insufficient for systems with large-gain interconnections. Subsequently, we show that global asymptotic stability can be systematically ensured by exploiting vector Lyapunov functions and algebraic Riccati equations. This leads to a scalable distributed architecture where local controllers require information from corresponding subsystems and neighbouring controllers. Conveniently, the communication flow has the same topology as the interconnection graph. Theoretical results are validated through application of the proposed architecture to voltage control of a DC power network.  \\
          \textbf{Keywords:} \emph{Distributed Control, Global Asymptotic Stability; Large-Scale Systems; Robust-Adaptive Control; Scalable Design}
     \end{abstract}

     \newpage

     \section{Introduction}
          Large-scale systems (LSS) that are often defined as interacting subsystems, such as communication, banking, supply chain, process \& chemical and electrical power systems, are some of the cornerstones of modern society. Over the last decade, the prominence of the Internet has led to increased demands for local and scalable algorithms that can achieve global objectives within typical LSS \cite{Anuradha2013}. Decentralised and distributed control architectures have become attractive alternatives to conventional centralised approaches due to growing complexities within LSS \cite{Lunze,Bakule1988a}. Despite this, centralised control architectures have well-known design criteria for guaranteeing performance and stability \cite{Macijejowski1989,Skogestad2007}. A key issue of LSS is to guarantee global asymptotic stability (GAS) and suitable performance of closed-loop coupled subsystems when equipped with decentralised controllers \cite{Siljak2011}. 

Existing decentralised and distributed controllers are commonly based on weak interactions between subsystems \cite{Riverso2013,Riverso2013a}. Such controllers treat interconnections as disturbances, requiring controllers to be robust with respect to neighbouring subsystems. This leads to conservative designs and consequently the retuning of local controllers, as demonstrated in \cite{OKeeffe2017a}.

         In general, GAS can be ensured for linear systems by using;
\begin{itemize}
\item Offline iterative decentralised controller tuning based on global knowledge.
\item Aggregated connective stability methods.
\item Linear matrix inequalities.
\end{itemize}

         The first method guarantees GAS by analysing the closed-loop eigenvalues of the global, interconnected system. If eigenvalues are unstable, decentralised controllers are iteratively retuned until these eigenvalues become stable. This method is centralised, meaning that it is not scalable. As overall system size grows, it becomes increasingly more difficult to determine the local controller/s which require retuning in order to maintain GAS according to offline analysis. Furthermore, this method lacks robustness to uncertainty of topology and interconnection models.

To overcome this, the connective stability approach was proposed in \cite{Lunze,Bakule1988a,Siljak2011,Djordjevic87} for unconstrained LSS. This uses an aggregate of subsystem and interconnection models to provide sufficient conditions for implying GAS. The method is scalable and robust to uncertainty as analysis is performed in a distributed fashion, and upper-bounds on interconnection models are incorporated. However, as alluded to in \cite{Riverso2013a}, it is limited according to the geometric small-gain theorem, which requires the interconnection gains to be sufficiently small or i.e. subsystem interactions are weak.

To circumvent small-gain conditions, the neutrality of interaction terms can be determined if state-space interconnection models are skew-symmetric. Recent developments have used linear matrix inequalities (LMIs) to ensure interactions are neutral \cite{Riverso2013,Tucci2016c,Tucci2016a,Sadabadi2017a}. LMIs are framed within a convex optimsation problem where the state-feedback control gains are solved such that the solution to local structured Lyapunov equations render interconnection terms close to zero and maintain GAS. However, this approach is also performed offline, and hence does not consider dynamic stability during topology changes i.e. plug-and-play operations, interconnection faults.

Decentralised adaptive controllers have gained popularity in heterogeneous LSS, particularly due to their ability to handle changing dynamics and uncertain interconnections using only local information \cite{Yoo2010,Vu2017,Vu2017b}. In \cite{OKeeffe2018e}, decentralised robust-adaptive controllers were implemented in a LSS to provide robustness to uncertainty concerning system topology, dynamics and plug-and-play operations. GAS was guaranteed using the first method described above. Though the synthesis of the decentralised controllers is scalable, offline GAS analysis is not. Consequently, this paper describes a scalable distributed control architecture that can guarantee sufficient GAS conditions for multi-input-multi-output (MIMO) LSS with large-gain interconnections.

  In this context, the paper provides several contributions: (i) an extension to the conventional collective stability method by incorporating decentralised model reference adaptive controllers (MRAC); (ii) the paper demonstrates that the extended collective stability method does not satisfy stability conditions in the presence of large-gain interconnections through application of a DC power network; (iii) a distributed robust-MRAC is outlined guarantee GAS of LSS with large-gain interconnections by solving local algebraic Riccati equations (ARE), and is validated through application of distributed $\mathcal{L}_1$ adaptive controllers to the same DC power network .  

Further to (iii), each local ARE is formed by including upper-bounded interconnection models to the robust-MRAC architecture. As a result, the robust-MRAC requires the state estimates of neighbouring MRACs. However, as this is a low-bandwidth, peer-peer communication constraint, the communication flow has the same topology as the interconnection graph, and thus scalability is maintained. Ultimately, by including upper-bounded interconnection models, for arbitrary system topology reconfiguration, stability can be maintained i.e. operation does not need to be stopped in order to perform offline iterative GAS analysis as before. 

The paper is organised as follows. The definition of a general MIMO distributed LSS, equipped with decentralised state-feedback baseline controllers and decentralised MRACs is provided in section 2. The extended connective stability method is derived in section 3. In section 4, the proposed distributed robust-MRAC architecture is derived and GAS of the large-gain interconnected LSS is proven. In section 5, the DC power network of \cite{Tucci2016c,OKeeffe2018e} is used to demonstrate that the small-gain theorem is not satisfied and consequently the connective stability method is insufficient for large-gain interconnections. Finally, stability of the same DC power network is demonstrated using the proposed distributed robust-MRAC architecture. 

A version of this work has been submitted to IEEE Transactions on Automatic Control.

     \section{System Definition}
         A general large-scale linear MIMO system, is considered in state-space form as,
\begin{equation}
\mathbf{\Sigma}:
\begin{cases}
\dot{\textbf{x}}(t)= \textbf{Ax}(t)+\textbf{Bu}(t) + \textbf{Ed}(t) \\
 \textbf{y}(t) = \textbf{Cx}(t)+\textbf{Du}(t)
 \end{cases}
 \label{eqn:LSSMIMO}
\end{equation}
where $\textbf{x} \in \mathbb{R}^n$, $\textbf{u} \in \mathbb{R}^m$, $\textbf{d} \in \mathbb{R}^r$, $\textbf{y} \in \mathbb{R}^q$ are the state, input, disturbance and output respectively. The notation denoting time, i.e. $\textbf{x}(t), \textbf{u}(t)$ etc. will be used only when necessary. As (\ref{eqn:LSSMIMO}) consists of $M$ interconnected subsystems, the state is partitioned into $M$ state vectors $x_{[i]} \in \mathbb{R}^{n_i}, i \in \mathcal{M}=\{1,...,M\}$ such that $\textbf{x}=(x_{[1]},...,x_{[M]})$ and $n = \Sigma_{i\in\mathcal{M}}n_i$. The input is partitioned into $M$ vectors $u_{[i]} \in \mathbb{R}^{m_i}, i \in \mathcal{M}$ such that $\textbf{u} = (u_{[1]},...,u_{[M]})$ and $m = \Sigma_{i\in\mathcal{M}}m_i$. The disturbance and output vectors are partitioned similarly, where $d_{[i]} \in \mathbb{R}^{r_i}$ and $y_{[i]} \in \mathbb{R}^{q_i}$. The matrices of (\ref{eqn:LSSMIMO}) are defined in section \ref{sec:7.1}. \vspace{3mm} \newline
\textbf{Assumption 1.} \textit{Though $\mathbf{A}$ can be unknown, $\mathbf{B}$ is assumed to be known.}
\vspace{3mm} \newline
The dynamics of the $i^{th}$ subsystem is given by,
\begin{equation}
   \Sigma_{[i]}^{\textrm{SS}}:
   \begin{cases}
   \dot{x}_{[i]}(t) = A_{ii}x_{[i]}(t)+ B_{i}u_{[i]}^{bl+ad}(t) + E_{i}d_{[i]}(t) + \zeta_{[i]}(t) \\
   y_{[i]}(t) = C_i x_{[i]}(t) + D_iu_{[i]}^{bl+ad}(t)
   \end{cases}
   \label{eq:DGUSSCL}
   \end{equation}
where $u_{[i]}^{bl+ad} \in \mathbb{R}^{m_i}$ is the control input that consists of the baseline and adaptive control signals, and $\zeta_{[i]} = \sum_{j\in\mathcal{N}_i}A_{ij}x_{[j]}\in \mathbb{R}^{n_i}, i, j \in \mathcal{M}$ is the interconnection model, and $\mathcal{N}_i$ is the set of neighbours to subsystem $i$ defined as $\mathcal{N}_i = \{j\in\mathcal{M}:A_{ij}\neq0,i\neq j\}$. $A_{ii} \in \mathbb{R}^{n_i\textrm{x}n_i}$ is the state matrix; $\hat{A}_{ij} \in \mathbb{R}^{n_i\textrm{x}n_j}$ is the interconnection matrix, $B_{i} =  $diag$(B_1,...,B_M)\in \mathbb{R}^{n_i\textrm{x}m_i}$ since subsystems are input decoupled, $C_i \in \mathbb{R}^{q_i\textrm{x}n_i}$, $D_i \in \mathbb{R}^{q_i\textrm{x}m_i}$  and $E_i \in \mathbb{R}^{n_i\textrm{x}r_i}$ are the constant input, output, direct transmission and input disturbance matrices respectively.
\vspace{3mm} \newline
\textbf{Assumption 2.} \textit{Matrices $A_{ii}$ and $B_{i}$ have full-rank, and although $A_{ii}$ is unknown, $(A_{ii}, B_{i})$ is controllable.}
\vspace{3mm} \newline
          In order to track a reference input, $r_{[i]}(t) \in \mathbb{R}^{q_i}$, in the presence of constant exogenous disturbances, integral states between the references and outputs are added to the local subsystem model. The dynamics are defined as,
     \begin{equation}
     \xi_{[i]}(t) = \int_{0}^{t}(r_{[i]}(t) - y_{[i]}(t)) dt  = \int_{0}^{t}(r_{[i]}(t) - C_{i}x_{[i]}(t)) dt
     \end{equation}
   With this, (\ref{eq:DGUSSCL}) can be rewritten as,
    \begin{equation}
       \bar{\Sigma}_{[i]}^{\textrm{SS}}:
       \begin{cases}
       \dot{\bar{x}}_{[i]} = \bar{A}_{ii}\bar{x}_{[i]}+ \bar{B}_{i}\bar{u}_{[i]}^{bl+ad} + \bar{E}_{i}\bar{d}_{[i]} + \bar{\zeta}_{[i]} \\
       \bar{y}_{[i]} = \bar{C}_i \bar{x}_{[i]} + \bar{D}_i\bar{u}_{[i]}^{bl+ad}
       \end{cases}
       \label{eq:DGUSSCL2}
       \end{equation}
        Subsequently, the open-loop model augmented with the integral state $\xi_{[i]}$ becomes $n_i+q_i$ order, where $q_i$ is the number of controlled outputs and hence number of integral states,  $\bar{x}_{[i]} = [[x_{[i]}]^T, \xi_{[i]}]^T \in \mathbb{R}^{n_i+q_i}, \bar{u}_{[i]}^{bl+ad} \in \mathbb{R}^{m_i+q_i}$, $\bar{d}_{[i]} = [[d_{[i]}]^T, r_{[i]}]^T \in \mathbb{R}^{r_i+q_i}$, $\bar{\zeta}_{[i]} = \sum_{j\in\mathcal{N}_i}\bar{A}_{ij}\bar{x}_{[j]}\in \mathbb{R}^{n_i+q_i}$. The matrices of (\ref{eq:DGUSSCL2}) are defined as,  
            \begin{equation*}
             \begin{aligned}
             \bar{A}_{ii}
              =
             \left[ \begin{array}{cc}
             A_{ii} & 0_{n_i\textrm{x}q_i} \\
             -C_{i} & 0_{q_i\textrm{x}q_i}
             \end{array} \right]
             \bar{A}_{ij} =
               \left[ \begin{array}{cc}
                A_{ij} & 0_{n_i\textrm{x}q_j} \\
                0_{q_i\textrm{x}n_j} & 0_{q_i\textrm{x}q_j}
                \end{array} \right]
                \bar{B}_{i}
                   =
                  \left[ \begin{array}{cc}
                  B_{i} \\
                  0_{q_i\textrm{x}m_i} 
                  \end{array} \right]
           \bar{C}_i = 
              \left[ \begin{array}{ccc}
              C_i & 0_{q_i\textrm{x}q_i}\\
              0_{q_i\textrm{x}n_i}&\mathbb{I}_{q_i\textrm{x}q_i} \\
               \end{array} \right]\\
               \bar{D}_{i} = 
                            \left[ \begin{array}{cc}
                            D_{i} \\ 0_{q_i\textrm{x}m_i}
                            \end{array} \right]
             \bar{E}_{i} = 
             \left[ \begin{array}{cc}
             E_{i} & 0_{n_i\textrm{x}q_i} \\
             0_{r_i\textrm{x}r_i} & \mathbb{I}_{q_i\textrm{x}q_i}
             \end{array} \right] 
             \end{aligned}
             \end{equation*} 
           where $\bar{A}_{ii} \in \mathbb{R}^{(n_i+q_i)\textrm{x}(n_i+q_i)}$, $\bar{A}_{ij} \in \mathbb{R}^{(n_i+q_i)\textrm{x}(n_j+q_j)}$, $\bar{B}_{i}\in \mathbb{R}^{(n_i+q_i)\textrm{x}m_i}$, $\bar{C_i} \in \mathbb{R}^{2q_i\textrm{x}(n_i+q_i)}$, $\bar{D_i} \in \mathbb{R}^{2q_i\textrm{x}m_i}$ and $\bar{E}_i \in \mathbb{R}^{(n_i+r_i)\textrm{x}(r_i+q_i)}$.
   
   The overall control signal consists of the summation between the state-feedback baseline controller and MRAC, defined as,
        \begin{equation}
        \begin{aligned}
         \mathcal{C}_{[i]} : u_{[i]}^{bl+ad}(t) = u_{[i]}^{bl}(t) + u_{[i]}^{mrac}(t) \\ u_{[i]}^{bl}(t) = -K_{i}^{bl}\bar{x}_{[i]}(t)
        \label{eq:SFBL}
        \end{aligned}
        \end{equation}
        where $K_{i}^{bl} = [K_{i}^{x}, K_{i}^{\xi}] \in \mathbb{R}^{(m_i\textrm{x}n_i)+q_i}$ is the state-feedback control gain vector, $K_{i}^{x}\in \mathbb{R}^{m_i\textrm{x}n_i}$ is the proportional gain vector for the original states of (\ref{eq:DGUSSCL}), and $ K_{i}^{\xi}\in \mathbb{R}^{q_i}$ is the integral gain.
        \vspace{3mm} \newline
        \textbf{Remark 1.} \textit{The method in this paper can readily be adapted to consider purely adaptive controllers.}
        \vspace{3mm} \newline
        This work considers unmatched uncertainty, whereby uncertainty enters each subsystem through the same channel as the control input.
        To reflect this, (\ref{eq:DGUSSCL2}) is rewritten as,
        \begin{equation}
              \bar{\Sigma}_{[i]}^{\textrm{SS}}:
              \begin{cases}
              \dot{\bar{x}}_{[i]} = \hat{A}_{m}\bar{x}_{[i]}+ \bar{B}_{i}(u_{[i]}^{bl+ad} +\bar{\theta}_{[i]}^T\bar{x}_{[i]})+F\bar{E}_i\bar{d}_{[i]} + \bar{\zeta}_{[i]} \\
              \bar{y}_{[i]} = \bar{C}_i \bar{x}_{[i]} + \bar{D}_iu_{[i]}^{bl+ad}
              \end{cases}
              \label{eq:DGUSSCL3}
              \end{equation}
              where $\bar{\theta}_{[i]}\in \mathbb{R}^{(n_i+q_i)\textrm{x}m_i}$ is the unknown uncertainty vector.
\vspace{3mm} \newline
\textbf{Assumption 3.} \textit{$(\bar{A}_{ii}, \bar{B_i})$ is controllable. That is, there always exists a vector $\bar{\theta}_{[i]}$ such that, the closed-loop eigenvalues can be placed anywhere, i.e. }$(\bar{A}_{ii} - \hat{A}_{m}) = \bar{B}_i\bar{\theta}_{[i]}^T \in \mathbb{R}^{(n_i+q_i)\textrm{x}(n_i+q_i)}$, \textit{where $\hat{A}_{m}$ is Hurwitz and represents the desired closed-loop dynamics.}
\vspace{3mm} \newline 
               Moreover, as the baseline controller compensates the exogenous disturbance it can be excluded, hence the term $F = \left[ \begin{array}{cc}
                           0_{n_i\textrm{x}r_i} & 0_{n_i\textrm{x}q_i} \\
                           0_{r_i\textrm{x}r_i} & \mathbb{I}_{q_i\textrm{x}q_i}
                           \end{array} \right] $ is introduced.
\vspace{3mm}

A typical MRAC architecture consists of a reference model/state-predictor and an adjustment mechanism. The reference model/state-predictor generates an estimate of each subsystem's uncertainty using an adaptation law. Subsequently, this is used to drive the subsystem to converge towards desired dynamics. The reference model/state-predictor is defined as,
\begin{equation}
 \hat{\Sigma}_{[i]}^{\textrm{SP}}:
 \begin{cases}
 \dot{\hat{x}}_{[i]} = \hat{A}_{m}\hat{x}_{[i]}+\bar{B}_i(u_{[i]}^{mrac}+\hat{\theta}_{[i]}^T\bar{x}_{[i]})+\bar{F}\bar{E}_i\bar{d}_{[i]}\\
 \hat{y}_{[i]} = \bar{C}_{i}\hat{x}_{[i]}
 \end{cases}
 \label{eq:L1SPSS2}
 \end{equation}
where $\hat{x}_{[i]} \in \mathbb{R}^{n_i+q_i}$ is the predicted state vector; $\hat{\theta} \in \mathbb{R}^{(n_i+q_i)\textrm{x}m_i}$ is the uncertainty estimate vector.

From the perspective of the MRAC, the baseline dynamics are combined with the open-loop subsystem dynamics to form an augmented closed-loop model. Without loss of generality, the reference model/state-predictor formulation is proposed for all subsystems, 
\begin{equation}
    \hat{A}_{m} = \left[ \begin{array}{cc}
    A_{m}-B_{i}K_{m}^{x} & B_{i}K_{m}^{\xi}\\ -C_{i} & 0 
    \end{array} \right]
    \end{equation}
    where $K_{m}^{bl} = [K_{m}^{x}, K_{m}^{\xi}]$ are the state-feedback and integral gains for nominal subsystem dynamics.
    
     The MRAC control signal can be defined as, 
    \begin{equation}
    u_{[i]}^{mrac}(t) = -\hat{\theta}_{[i]}^T(t)\bar{x}_{[i]}(t)
    \label{eqn:adLaw}
    \end{equation}
    Subsequently, the update law for estimating $\hat{\theta}_{[i]}$ is derived from Lyapunov theory; in section III when applying the extended connective stability method, and in section IV when applying the proposed distributed architecture.

In steady-state, when plant dynamics have converged to the dynamics of the MRAC, the overall closed-loop model of the uncertain large-scale MIMO system can be written as,
\begin{equation}
 \mathbf{\Sigma}:
\begin{cases}
\dot{\hat{\textbf{x}}}(t)= \hat{\textbf{A}}\hat{\textbf{x}}(t)+ \bar{\textbf{E}}\bar{\textbf{d}}(t) \\
 \hat{\textbf{y}}(t) = \hat{\textbf{C}}\hat{\textbf{x}}(t)+\bar{\textbf{D}}\bar{\textbf{u}}(t)
 \end{cases}
 \label{eqn:LSSMIMOcl}
\end{equation}
where \begin{math}\hat{\textbf{A}} = \hat{\textbf{A}}_\textbf{D}+\hat{\textbf{A}}_\textbf{C} \in \mathbb{R}^{(n+q)\textrm{x}(n+q)}, \hat{\textbf{A}}_\textbf{D}\end{math} =  diag\begin{math}(\hat{A}_{m1},...,\hat{A}_{mM})\end{math} contains the decoupled desired dynamics and \begin{math}\hat{\textbf{A}}_\textbf{C} = \hat{\textbf{A}}-\hat{\textbf{A}}_\textbf{D}\end{math} contains the off-diagonal interconnection terms.

          \section{Connective Stability}
          The idea of connective stability is to verify the local asymptotic stability of subsystems, and subsequently infer offline global stability by constructing an aggregated model that describes both local dynamics and the mutual interactions with neighbouring subsystems \cite{Lunze,Bakule1988a,Siljak2011}.

Initially, stability is proven for the decoupled adaptive subsystem. Subsequently, lower and upper-bounds, which account for interconnections, are derived in order to infer GAS from Lyapunov theory. The decoupled adaptive subsystem is defined as.
     \begin{equation}
     \begin{aligned}
        \bar{\Sigma}_{[i]}^{\textrm{SS}}:
        \dot{\hat{x}}_{[i]} = \hat{A}_{m}\hat{x}_{[i]} + \bar{B}_i(u_{[i]}^{mrac} +\hat{\theta}_{[i]}^T\bar{x}_{[i]})+ F\bar{E}_i\bar{d}_{[i]}
        \label{eq:DGUSSdec}
        \end{aligned}
        \end{equation}
         The tracking error dynamics between the decoupled subsystem and reference model/state predictor is obtained by subtracting (\ref{eq:L1SPSS2}) from (\ref{eq:DGUSSdec}), which yields,
                 \begin{equation}
                 \begin{aligned}
                 \dot{e}_{[i]}(t) = \dot{\bar{x}}_{[i]}(t) - \dot{\hat{x}}_{[i]}(t) \\
                        \dot{e}_{[i]}(t) = \hat{A}_{m}e_{[i]}(t)+\bar{B}_i\tilde{\theta}_{[i]}^T(t)\bar{x}_{[i]}(t)
                         \end{aligned}
                         \end{equation}
             where $\tilde{\theta}_{[i]}(t) = \bar{\theta}_{[i]}(t)-\hat{\theta}_{[i]}(t) \in \mathbb{R}^{(n_i+q_i)\textrm{x}m_i}$ is the uncertainty estimate error. The adaptive law in (\ref{eqn:adLaw}) is derived from Lyapunov theory by ensuring local adaptive subsystems are asymptotically stable in the presence of no interconnections.     
           \begin{prop}Decoupled adaptive subsystems of (\ref{eq:DGUSSdec}) are locally asymptotically stable for all $i = 1,...,M$, if for an arbitrary $Q_i = Q_i^T > 0 \in \mathbb{R}^{(n_i+q_i)\textrm{x}(n_i+q_i)}$, there exists $P_i = P_i^T > 0 \in \mathbb{R}^{(n_i+q_i)\textrm{x}(n_i+q_i)}$ that satisfies the Lyapunov equation $P_iA_{m}+A_{m}^TP_i< -Q_i$
           \end{prop}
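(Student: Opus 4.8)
The plan is to treat this as a textbook model-reference adaptive control (MRAC) stability argument: build a composite Lyapunov function combining a quadratic form in the tracking error with a quadratic form in the parameter-estimation error, and then choose the update law for $\hat{\theta}_{[i]}$ so that the indefinite cross terms cancel exactly. The starting point is the error dynamics already established just before the statement, $\dot{e}_{[i]} = \hat{A}_{m} e_{[i]} + \bar{B}_i \tilde{\theta}_{[i]}^T \bar{x}_{[i]}$, in which $\hat{A}_{m}$ is the Hurwitz reference matrix (Assumption 3) that the Lyapunov inequality in the statement is understood to involve.

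First I would propose the candidate
\begin{equation*}
V_i(e_{[i]},\tilde{\theta}_{[i]}) = e_{[i]}^T P_i e_{[i]} + \mathrm{tr}\!\left(\tilde{\theta}_{[i]}^T \Gamma_i^{-1} \tilde{\theta}_{[i]}\right),
\end{equation*}
with $\Gamma_i = \Gamma_i^T > 0$ a free adaptation-gain matrix; positive definiteness of $V_i$ is immediate from $P_i>0$ and $\Gamma_i>0$. Differentiating along the error dynamics, and using $\dot{\tilde{\theta}}_{[i]} = -\dot{\hat{\theta}}_{[i]}$ (the true uncertainty $\bar{\theta}_{[i]}$ being constant), yields
\begin{equation*}
\dot{V}_i = e_{[i]}^T\!\left(\hat{A}_{m}^T P_i + P_i \hat{A}_{m}\right) e_{[i]} + 2 e_{[i]}^T P_i \bar{B}_i \tilde{\theta}_{[i]}^T \bar{x}_{[i]} - 2\,\mathrm{tr}\!\left(\tilde{\theta}_{[i]}^T \Gamma_i^{-1} \dot{\hat{\theta}}_{[i]}\right).
\end{equation*}
The Lyapunov inequality of the statement bounds the first term by $-e_{[i]}^T Q_i e_{[i]}$, while the middle term, being scalar, equals its own trace and by cyclic invariance can be rewritten as $2\,\mathrm{tr}(\tilde{\theta}_{[i]}^T \bar{x}_{[i]} e_{[i]}^T P_i \bar{B}_i)$.

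The key step is then to \emph{define} the adaptation law $\dot{\hat{\theta}}_{[i]} = \Gamma_i \bar{x}_{[i]} e_{[i]}^T P_i \bar{B}_i$, which is precisely the update law underlying the control signal (\ref{eqn:adLaw}); substituting it annihilates the two trace terms and leaves $\dot{V}_i \le -e_{[i]}^T Q_i e_{[i]} \le 0$. From this I would deduce that $V_i$ is nonincreasing and bounded, so that both $e_{[i]}$ and $\tilde{\theta}_{[i]}$ remain bounded for all $t$.

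The main obstacle, and the point requiring the most care, is that $\dot{V}_i$ is only negative \emph{semi}definite: it vanishes whenever $e_{[i]}=0$ irrespective of $\tilde{\theta}_{[i]}$, so the classical Lyapunov asymptotic-stability theorem cannot be applied to the full state $(e_{[i]},\tilde{\theta}_{[i]})$. To close the argument I would integrate $\dot{V}_i$ to conclude $e_{[i]} \in \mathcal{L}_2$, observe that $\dot{e}_{[i]}$ is bounded (since $e_{[i]}$, $\tilde{\theta}_{[i]}$ and $\bar{x}_{[i]}$ are), and invoke Barbalat's lemma to obtain $e_{[i]}(t) \to 0$. Hence what the statement terms asymptotic stability is, strictly speaking, asymptotic convergence of the tracking error together with uniform boundedness of all signals; convergence of the parameter error $\tilde{\theta}_{[i]}\to 0$ would additionally demand a persistency-of-excitation condition on $\bar{x}_{[i]}$, which is neither assumed nor deliverable from a merely negative-semidefinite $\dot{V}_i$, and which I would flag explicitly.
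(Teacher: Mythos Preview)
Your argument is correct and is the textbook MRAC proof, but it takes a genuinely different route from the paper. The paper deliberately chooses a \emph{non-quadratic} candidate,
\[
\mathcal{V}_{[i]} = \sqrt{e_{[i]}^T P_i e_{[i]}} + \tilde{\theta}_{[i]}^T \Gamma_i^{-1}\tilde{\theta}_{[i]},
\]
and derives from it a \emph{normalised} adaptive law with $(e_{[i]}^T P_i e_{[i]})^{1/2}$ in the denominator. The reason is not Proposition~1 itself but what comes next: the connective-stability framework of Theorem~1 needs comparison inequalities that are \emph{linear} in $\|e_{[i]}\|$ so that the aggregate model $\dot{\mathbf{V}}\le \mathbf{M}\mathbf{V}+\mathbf{\Phi}$ can be assembled; a purely quadratic $V_i$ would give quadratic bounds and would not slot into that aggregation. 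Your quadratic candidate is cleaner for the isolated proposition, avoids the singularity at $e_{[i]}=0$, and sidesteps the non-convergence of adaptation that the paper itself concedes in Remark~1; indeed, the paper abandons the square-root form in Section~IV and reverts to exactly your quadratic $V_i$ for the ARE-based analysis. Your closing caveats on Barbalat's lemma and the need for persistency of excitation for $\tilde{\theta}_{[i]}\to 0$ are more precise than the paper's own treatment, which stops at $\dot{\mathcal{V}}_{[i]}^{dec}\le 0$ without invoking Barbalat.
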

                
          \begin{proof}
              It is convenient to use a non-quadratic Lyapunov function candidate for linear systems \cite{Bakule1988a}. The energy trajectories are mapped by defining the Lyapunov function in terms of tracking error and estimate error measurements,
           \begin{equation}
               \mathcal{V}_{[i]}(e_{[i]}(t),\tilde{\theta}_{[i]}(t) = \sqrt{e_{{[i]}(t)}^TP_{i}e_{[i]}(t)}+\tilde{\theta}_{[i]}^T(t)\Gamma_{i}^{-1}\tilde{\theta}_{[i]}(t)
               \label{eqn:Vi}
               \end{equation} 
                where $\Gamma_{i} \in \mathbb{R}^+$ is the adaptive gain. The derivative of (\ref{eqn:Vi}) yields,
            \begin{equation}
            \begin{aligned}
            \dot{\mathcal{V}}_{[i]}^{dec} = \frac{d\mathcal{V}_{[i]}}{de_{[i]}}\dot{e}_{[i]} + \frac{d\mathcal{V}_{[i]}}{d\theta_{[i]}}\dot{\theta}_{[i]} = \frac{1}{2} (e_{[i]}^TP_{i}e_{[i]})^{-1/2}(e_{[i]}^TP_{i}\dot{e}_{[i]} 
            \\
            + \dot{e}_{[i]}^TP_{i}e_{[i]})+2\tilde{\theta}_{[i]}^T\Gamma_{i}^{-1}\dot{\tilde{\theta}}_{[i]}
            \\
            = \frac{1}{2} (e_{[i]}^TP_{i}e_{[i]})^{-1/2}(e_{[i]}^T(\hat{A}_{m}^TP_{i} + P_{i}\hat{A}_{m})e_{[i]}
            \\
             + 2e_{[i]}^TP_{i}\bar{B}_i\tilde{\theta}_{[i]}\hat{x}_{[i]})+2\tilde{\theta}_{[i]}^T\Gamma_{i}^{-1}\dot{\tilde{\theta}}_{[i]}
            \end{aligned}
            \label{dotVi}
            \end{equation}
            Since $\dot{\tilde{\theta}}_{[i]}(t) \triangleq \dot{\bar{\theta}}_{[i]}(t) - \dot{\hat{\theta}}_{[i]}(t)$. As the unknown uncertainty vector is a constant,
 $\dot{\theta}_{[i]}(t) = 0$,
            \begin{equation}
            \begin{aligned}
            \dot{\mathcal{V}}_{[i]}^{dec} = \frac{1}{2} (e_{[i]}^TP_{i}e_{[i]})^{-1/2}e_{[i]}^T(\hat{A}_{m}^TP_{i} + P_{i}\hat{A}_{m})e_{[i]} 
            \\
            + \frac{e_{[i]}^TP_{i}\bar{B}_{i}\tilde{\theta}_{[i]}\hat{x}_{[i]}}{(e_{[i]}^TP_{i}e_{[i]})^{1/2}} - 2\tilde{\theta}_{[i]}^T\Gamma_{i}^{-1}\dot{\hat{\theta}}_{[i]}
            \end{aligned}
            \label{eqn:Vdot4}
            \end{equation}
            For (\ref{eqn:Vdot4}) to be at least negative semi-definite, the update law for the parameter estimate is selected as,
           \begin{equation}
                \dot{\hat{\theta}}_{[i]} = -\Gamma_{i}\frac{e_{[i]}^TP_{i}\bar{B}_{i}\hat{x}_{[i]}}{2(e_{[i]}^TP_{i}e_{[i]})^{1/2}}
                \label{eqn:update}
                \end{equation}
               Finally,
               \begin{equation}
               \dot{\mathcal{V}}_{[i]}^{dec} \leq -\frac{1}{2}\frac{e_{[i]}^TQ_{i}e_{[i]}}{(e_{[i]}^TP_{i}e_{[i]})^{1/2}} \leq 0
               \label{Vdec}
               \end{equation}
               \end{proof}

        The stability properties of the overall LSS can be established collectively by defining the vector Lyapunov function as,
            \begin{equation}
            \mathbf{V}(t)
             = \left[ \begin{array}{c}
            \mathcal{V}_{[1]}(t)\\
            \vdots\\
            \mathcal{V}_{[M]}(t)
            \end{array} \right]
            \end{equation} 
       \begin{thm}
       The overall aggregate model, represented by,
      \begin{equation}
              \mathbf{\dot{V}}(t) \leq \mathbf{MV}(t) + \mathbf{\Phi}
              \label{eqn:AggModel}
              \end{equation}
              is a sufficient condition for the GAS of (\ref{eqn:LSSMIMOcl}) if \begin{enumerate}
                \item $\mathbf{M}$, of equation (\ref{eqn:M}), is asymptotically stable
                \item $\mathbf{||M||_1 > \Phi}$
              \end{enumerate}
              \end{thm}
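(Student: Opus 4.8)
The plan is to derive a scalar differential inequality for each Lyapunov component $\mathcal V_{[i]}$ along the trajectories of the \emph{interconnected} system, assemble them into the vector comparison inequality (\ref{eqn:AggModel}), and then invoke a comparison principle for Metzler systems to infer GAS of (\ref{eqn:LSSMIMOcl}). First I would differentiate $\mathcal V_{[i]}$ along the full dynamics, in which the coupling term $\bar\zeta_{[i]}=\sum_{j\in\mathcal N_i}\bar A_{ij}\bar x_{[j]}$ now enters the error dynamics that were absent in Proposition 1. Because the computation in (\ref{dotVi}) is linear in $\dot e_{[i]}$, the decoupled part simply reproduces the bound (\ref{Vdec}), namely $-\tfrac12 (e_{[i]}^T Q_i e_{[i]})/(e_{[i]}^T P_i e_{[i]})^{1/2}$, while the coupling adds a single extra term of the form $(e_{[i]}^T P_i\bar\zeta_{[i]})/(e_{[i]}^T P_i e_{[i]})^{1/2}$.

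The key step is to turn both contributions into expressions that are \emph{linear} in the components of $\mathbf V$. For the decoupled term I would use $Q_i\ge\underline\lambda(Q_i)\,I$ together with $e_{[i]}^T P_i e_{[i]}\le\overline\lambda(P_i)\,\|e_{[i]}\|^2$ to extract a negative term $-\underline\alpha_i\,\mathcal V_{[i]}$. For the coupling term, the homogeneity of the square-root Lyapunov function is exactly what makes the estimate close at degree one: by Cauchy–Schwarz in the $P_i$-inner product, $(e_{[i]}^T P_i\bar A_{ij}\bar x_{[j]})/(e_{[i]}^T P_i e_{[i]})^{1/2}\le\|P_i^{1/2}\bar A_{ij}P_j^{-1/2}\|\,\|P_j^{1/2}(\cdot)_{[j]}\|\le\xi_{ij}\,\mathcal V_{[j]}$, where $\xi_{ij}$ depends only on the upper-bounded interconnection norm and the conditioning of $P_i,P_j$. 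Absorbing the disturbance and reference contributions into the constant vector $\mathbf\Phi$, and collecting $-\underline\alpha_i$ and $\xi_{ij}$ into $\mathbf M$, yields (\ref{eqn:AggModel}) with $\mathbf M$ Metzler (nonnegative off-diagonal entries) by construction.

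I would then close the argument with the comparison principle for quasi-monotone (Metzler) systems. Since $\mathbf M$ is Metzler, $\dot{\mathbf V}\le\mathbf M\mathbf V+\mathbf\Phi$ implies $\mathbf V(t)\le\mathbf z(t)$ componentwise, where $\dot{\mathbf z}=\mathbf M\mathbf z+\mathbf\Phi$ with $\mathbf z(0)=\mathbf V(0)$. Hypothesis~1 ($\mathbf M$ asymptotically stable) gives $\mathbf z(t)\to-\mathbf M^{-1}\mathbf\Phi$ with $-\mathbf M^{-1}\ge0$ entrywise, so $\mathbf V$ is bounded and converges; Hypothesis~2 then controls the size of the residual forcing $\mathbf\Phi$ relative to $\mathbf M$, so that convergence of $\mathbf V$ forces the tracking and estimate errors to the required set and the plant converges to the reference-model dynamics, delivering GAS of (\ref{eqn:LSSMIMOcl}) in the stated sense.

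The hard part, I expect, is the passage from the \emph{true} neighbouring state $\bar x_{[j]}$ inside $\bar\zeta_{[i]}$ to the Lyapunov component $\mathcal V_{[j]}$, since $\mathcal V_{[j]}$ is built from $e_{[j]}$ and $\tilde\theta_{[j]}$ rather than directly from $\bar x_{[j]}$. This forces either a restriction to the converged regime of (\ref{eqn:LSSMIMOcl}), where $\mathcal V_{[j]}$ reduces to $\|P_j^{1/2}\hat x_{[j]}\|$, or an auxiliary bound that absorbs $\|\hat x_{[j]}\|$ into $\mathbf\Phi$. Keeping this bookkeeping clean—so that every off-diagonal coefficient $\xi_{ij}$ remains a state-independent constant determined solely by the interconnection bounds—is the step that must be handled with the most care.
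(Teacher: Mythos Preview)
Your high-level plan matches the paper's proof closely: differentiate $\mathcal V_{[i]}$ along the interconnected dynamics, split into the decoupled bound (\ref{Vdec}) plus a coupling contribution, convert both into affine functions of the $\mathcal V_{[j]}$ via eigenvalue sandwiches, and assemble into (\ref{eqn:AggModel}). Two specific points, however, diverge from the paper and deserve correction.

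First, the constant vector $\mathbf\Phi$ does \emph{not} arise from disturbance or reference terms as you suggest; those are already absorbed by the baseline controller through the matrix $F$ in (\ref{eq:DGUSSCL3}). In the paper, $\mathbf\Phi$ comes from the additive offset $\theta_{\max}/\Gamma_i$ in the two-sided bound (\ref{eqn:bounds1})--(\ref{eqn:errorRe}) that relates $\|e_{[i]}\|$ back to $\mathcal V_{[i]}$. When you substitute the lower bound of $\|e_{[i]}\|$ into the negative term and the upper bound of $\|e_{[j]}\|$ into the coupling term, the constant $\theta_{\max}/\Gamma$ pieces are what produce (\ref{eqn:Phi}). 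Your Cauchy--Schwarz estimate in the $P_i$-inner product is a clean alternative to the paper's raw eigenvalue bounds, but it bounds the coupling by $\|P_j^{1/2}e_{[j]}\|$, not by $\mathcal V_{[j]}$ directly; you still need (\ref{eqn:bounds1}) to close, and that is where $\mathbf\Phi$ enters.

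Second, on the ``hard part'' you correctly flag: the paper takes neither of your two proposed routes. Rather than restricting to the converged regime or pushing $\|\hat x_{[j]}\|$ into $\mathbf\Phi$, the paper \emph{modifies the predictor} so that the interconnection term in the error dynamics becomes $\tilde\zeta_{[i]}=\sum_{j\in\mathcal N_i}\hat A_{ij}e_{[j]}$ rather than $\sum_{j}\hat A_{ij}\bar x_{[j]}$ (see the paragraph following (\ref{Vdotf})). This structural change is what makes the off-diagonal coefficients state-independent and is precisely the step that later motivates the distributed architecture of Section~\ref{sec:plugplay}. Your proposal would benefit from adopting this device explicitly rather than leaving it as an either/or.
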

            
              \begin{proof}
    The lower and upper-bounds of (\ref{eqn:Vi}) are defined as,
        \begin{equation}
        \begin{aligned}
        \lambda_{min}(P_{i})||e_{[i]}||+ \frac{||\theta||^2}{\Gamma_{i}} \leq ||\mathcal{V}_{[i]}|| \leq \lambda_{max}(P_{i})||e_{[i]}||
        +\frac{||\theta||^2}{\Gamma_{i}}
        \end{aligned}
        \label{eqn:bounds1}
        \end{equation}
        where, $\lambda_{min}$, $\lambda_{max}$ are minimum and maximum eigenvalues, which correspond to lower and upper bounds of the matrices. From \cite{L12010}, the maximum bound on the parameter estimate is defined as, $\theta_{max} = 4\max_{\theta \in \Theta}||\theta||^{2}_1$.
       
        The derivative of the Lyapunov function for the interconnected subsystem can be written as,
        \begin{equation}
        \dot{\mathcal{V}}_{[i]} = \frac{d\mathcal{V}_{[i]}}{de_{[i]}}(\hat{A}_{m}e_{[i]} + \bar{B}_i\tilde{\theta}_{[i]}\bar{x}_{[i]} + \hat{\zeta}_{[i]}) + \frac{d\mathcal{V}_{[i]}}{d\tilde{\theta}_{[i]}}\dot{\tilde{\theta}}_{[i]}
        \end{equation}
       Using (\ref{dotVi}) and the parameter estimate (\ref{eqn:update}), this can be simplified to,
         \begin{equation}
        \dot{\mathcal{V}}_{[i]} = \dot{\mathcal{V}}_{[i]}^{dec} + \frac{d\mathcal{V}_{[i]}}{de_{[i]}}\hat{\zeta}_{[i]} 
        \end{equation}
From (\ref{Vdec}), the upper-bound of the decoupled Lyapunov derivative is,
        \begin{equation}
        ||\dot{\mathcal{V}}_{[i]}^{dec}|| \leq -\frac{\lambda_{min}(Q_{i})}{2\sqrt{\lambda_{max}(P_{i})}}||e_{[i]}|| 
        \end{equation}
        To determine the upper-bound on $\frac{d\mathcal{V}_{[i]}}{de_{[i]}}$, the first term on the right hand side of (\ref{dotVi}) is rearranged. 
        \begin{equation}
                    ||\frac{d\mathcal{V}_{[i]}}{de_{[i]}}|| = ||(e_{[i]}^TP_{i}e_{[i]})^{-1/2}e_{[i]}^TP_{i}|| \leq \frac{\lambda_{max}(P_{i})}{\sqrt{\lambda_{min}(P_{i})}}
                    \end{equation}
        The upper-bound on the interconnection gain is defined as,
            \begin{equation}
            ||\hat{\zeta}_{[i]}|| \leq \sum_{j\in\mathcal{N}_{i}}||{\hat{A}_{ij}}||.||{\hat{x}_{[j]}}||
            \label{eqn:interconnect}
            \end{equation}
    Therefore,
    \begin{equation}
    \begin{aligned}
    \dot{\mathcal{V}}_{[i]} \leq -\frac{\lambda_{min}(Q_{i})}{2\sqrt{\lambda_{max}(P_{i})}}||e_{[i]}|| +
     \frac{\lambda_{max}(P_{i})}{\sqrt{\lambda_{min}(P_{i})}}\sum_{j\in\mathcal{N}_i}||{\hat{A}_{ij}}||.||{\hat{x}_{[j]}}||
    \end{aligned}
    \label{Vdotf}
    \end{equation}
   From (\ref{Vdotf}), the state vector from neighbouring subsystem $j$ appears. It cannot be guaranteed that $\hat{x}_{[j]}$ is at least negative semi-definite, to ensure (\ref{Vdotf}) is also negative semi-definite. With regards to a regulation problem, where small-signals $\hat{x}_{[i]}$ and $\hat{x}_{[j]}$ would be regulated to zero, this seems valid. However, this is not the case for reference tracking. This problem can be overcome by making the reference model/predictor dependent on the interconnection model i.e. interconnection term now defined as $\tilde{\zeta}_{[i]}  = \sum_{j\in\mathcal{N}_i}{\hat{A}_{ij}}.{e_{[j]}} $, where $e_{[j]} = \hat{x}_{[i]} - \hat{x}_{[j]}$.
  
  Therefore, (\ref{Vdotf}) becomes, 
   \begin{equation}
       \begin{aligned}
       \dot{\mathcal{V}}_{[i]} \leq -\frac{\lambda_{min}(Q_{i})}{2\sqrt{\lambda_{max}(P_{i})}}||e_{[i]}||+        \frac{\lambda_{max}(P_{i})}{\sqrt{\lambda_{min}(P_{i})}}\sum_{j\in\mathcal{N}_i}||{\hat{A}_{ij}}||.||{e_{[j]}}||
       \end{aligned}
       \label{Vdotf2}
       \end{equation}  
    From (\ref{eqn:bounds1}), the tracking error is lower and upper-bounded by,
    \begin{equation}
    \frac{\mathcal{V}_{[i]} - \frac{\theta_{max}}{\Gamma_{i}}}{\sqrt{\lambda_{max}(P_{i})}} \leq ||e_{[i]}|| \leq \frac{\mathcal{V}_{[i]} - \frac{\theta_{max}}{\Gamma_{i}}}{\sqrt{\lambda_{min}(P_{i})}}
    \label{eqn:errorRe}
     \end{equation} 
   
   Equation (\ref{eqn:errorRe}) assumes that  initialisation tracking and parametric estimate errors are zero. For non-zero initialisation state and parametric estimation errors, from \cite{L12010}, the following upper-bound is used,
  \begin{equation} ||e_{[i]}|| \leq \rho_{[i]}
  \end{equation}
  where,
  \begin{equation}
  \rho_{[i]}\triangleq\sqrt{\frac{(\mathcal{V}_{[i]}(0)-\frac{\theta_{max}}{\Gamma_{i}})e^{-\alpha_i t}}{\lambda_{min}(P_{i})}+\frac{\theta_{max}}{\Gamma_{i}\lambda_{min}(P_{i})}}
  \end{equation}
  and,
  \begin{equation}
  \alpha \triangleq \frac{\lambda_{min}(Q_{i})}{\lambda_{max}(P_{i})}
  \end{equation}
  
    Rearranging (\ref{Vdotf2}) by replacing $e_{[i]}$ with its lower-bound and $e_{[j]}$ with its upper-bound yields,
    \begin{equation}
    \begin{aligned}
    \dot{\mathcal{V}}_{[i]} \leq -\frac{\lambda_{min}(Q_{i})}{2\lambda_{max}(P_{i})}\mathcal{V}_{[i]} +  \frac{\lambda_{max}(P_{i})}{\sqrt{\lambda_{min}(P_{i})\lambda_{min}(P_{j})}}\sum_{j\in\mathcal{N}_i}||{\hat{A}_{ij}}||\mathcal{V}_{[j]} +  \frac{\lambda_{min}(Q_{i})}{2\lambda_{max}(P_{i})}\frac{\theta_{max}}{\Gamma_{i}}\\-\frac{\lambda_{max}(P_{i})}{\sqrt{\lambda_{min}(P_{i})\lambda_{min}(P_{j})}}\frac{\theta_{max}}{\Gamma_{j}}\sum_{i\in\mathcal{N}_j}||\hat{A}_{ji}||
    \end{aligned}
    \label{Vdotf3}
    \end{equation}
Rearranging (\ref{Vdotf3}) in aggregated form yields (\ref{eqn:AggModel}), where
\begin{gather}
{\mathbf{M} =
                                                                                                        \left[\begin{array}{cccc}
                                                                                                        -\frac{\lambda_{min}(Q_{[1]})}{2\lambda_{max}(P_{[1]})} & \frac{\lambda_{max}(P_{[1]})}{\sqrt{\lambda_{min}(P_{[1]})\lambda_{min}(P_{[2]})}}||\hat{A}_{12}|| & \cdots & \frac{\lambda_{max}(P_{[1]})}{\sqrt{\lambda_{min}(P_{[1]})\lambda_{min}(P_{[N]})}}||\hat{A}_{1N}|| \\
                                                                                                        \frac{\lambda_{max}(P_{[2]})}{\sqrt{\lambda_{min}(P_{[2]})\lambda_{min}(P_{[1]})}}||\hat{A}_{21}|| & -\frac{\lambda_{min}(Q_{[2]})}{2\lambda_{min}(P_{[2]})} & \cdots & \frac{\lambda_{max}(P_{[2]})}{\sqrt{\lambda_{min}(P_{[2]})\lambda_{max}(P_{[N]})}}||\hat{A}_{2N}|| \\
                                                                                                        \vdots & \vdots & \ddots & \vdots \\
                                                                                                        \frac{\lambda_{max}(P_{[N]})}{\sqrt{\lambda_{min}(P_{[N]})\lambda_{min}(P_{[1]})}}||\hat{A}_{N1}|| & \frac{\lambda_{max}(P_{[N]})}{\sqrt{\lambda_{min}(P_{[N]})\lambda_{min}(P_{[2]})}}||\hat{A}_{N2}|| & \cdots & -\frac{\lambda_{min}(Q_{[N]})}{2\lambda_{max}(P_{[N]})}\\    
                                                                                                        \end{array} \right]}
                                                                                                        \label{eqn:M}                                            
\end{gather}
and,
\begin{gather}
\mathbf{\Phi} =       
                                                                                                         \left[ \begin{array}{cccc}
                                                                                                                            \theta_{max}(\frac{\lambda_{min}(Q_{[1]})}{2\Gamma_{1}\lambda_{max}(P_{[1]})}-\frac{\lambda_{max}(P_{[1]})||\hat{A}_{21}||}{\Gamma_{2}\sqrt{\lambda_{min}(P_{[1]})\lambda_{min}(P_{[2]})}}) & \cdots &0\\
                                                                                                                            \vdots & \ddots & \vdots\\
                                                                                                                            0 & \cdots & \theta_{max}(\frac{\lambda_{min}(Q_{[N]})}{2\Gamma_{N}\lambda_{max}(P_{[N]})}-\frac{\lambda_{max}(P_{[N]})||\hat{A}_{ N-1.N]
                                                                                                                            }||}{\Gamma_{N-1}\sqrt{\lambda_{min}(P_{[N]})\lambda_{min}(P_{[N-1]})}}) \\
                                                                                                                            \end{array} \right]
                                                                                                                            \label{eqn:Phi}
                                                                                                                            \end{gather}
                                                                     
 GAS is guaranteed in a distributed manner for the overall system if, (1) the diagonal elements of $\mathbf{M}$, which are associated with each decoupled subsystem, are greater than the off-diagonal elements, which are associated with the bounded interconnections i.e.
\begin{equation}
                \frac{\lambda_{min}(Q_{i})}{2\lambda_{max}(P_{i})} > \sum_{j = 1}^{N} \frac{\lambda_{max}(P_{i})}{\sqrt{\lambda_{min}(P_{i}})}\frac{||{\hat{A}_{ij}}||}{\sqrt{\lambda_{min}(P_{j})}}
                \label{eqn:cond1}
                \end{equation}
and, (2)
\begin{equation}
                \mathbf{||M||_1 > \Phi}
                \end{equation}
As $\mathbf{\Phi}$ is block diagonal, adaptation influences the stability of the local subsystem only and does not affect neighbouring subsystems. The terms of $\mathbf{\Phi}$ can be made arbitrarily small by using  large adaptive gains.
\end{proof}

As the desired dynamics of each adaptive subsystem, reflected by $A_m$, are chosen to be homogeneous, i.e. all 
subsystems converge to the same dynamics, then: $\lambda_{max}(P_{i}) = \lambda_{max}(P_{j})$, $\lambda_{min}(P_{i}) = \lambda_{min}(P_{j})$, $\lambda_{max}(Q_{i}) = \lambda_{max}(Q_{j})$, $\lambda_{min}(Q_{i}) = \lambda_{min}(Q_{j})$, $\hat{A}_{ij} = \hat{A}_{ji}$, and $\Gamma_{i}=\Gamma_{j}$. With this, (\ref{eqn:AggModel}) is negative-definite if, 
\begin{equation}
                \frac{\lambda_{min}(Q_{i})}{2\lambda_{max}(P_{i})} > \frac{\lambda_{max}(P_{i})}{\lambda_{min}(P_{i})}N_{i}||{\hat{A}_{ij}}||
                \label{eqn:cond2}
                \end{equation}
where $N_i$ is the number of interconnections.
\vspace{3mm} \newline
\textbf{Remark 1.} \textit{On close inspection, the normalising effect of the denominator in (\ref{eqn:update}) means that, $\lim\limits_{t\rightarrow \infty}\frac{e_{[i]}^TQ_{i}e_{[i]}}{(e_{[i]}^TP_{i}e_{[i]})^{1/2}} \not= 0$ and, $\lim\limits_{t\rightarrow \infty}\dot{\hat{\theta}}_{[i]} \not= 0$. Consequently, adaptation never stops. Thus, it cannot be proven using a non-quadratic Lyapunov function candidate, as required by the conventional connective stability method, that $\mathcal{V}_{[i]}(e_{[i]}, \tilde{\theta}_{[i]})$ converges and whether the system is globally asymptotically stable.}
\vspace{1.5mm} \newline
Ultimately, the connective stability does not provide a convergent solution when adaptive controllers are employed method is restricted to small-gain interconnections, as demonstrated in section V-A, and does not provide a convergent solution when adaptive controllers are employed. 
\vspace{3mm} \newline
\textbf{Remark 2.} \textit{The concept of the "geometric small-gain theorem" applied to interconnections is discussed in \cite{Riverso2013a} with regard to tube-based decentralised model predictive controllers. State, state-error and interconnection dynamics are defined for $\bar{\Sigma}_{i}^{\textrm{SS}}$ within invariant polytope sets $x_{[i]} \in \mathbb{X}$, $z_{[i]} \in \mathbb{Z}$ and $w_{[i]} \in \mathbb{W}$ respectively. As the error dynamics are subsets of the state dynamics, it must hold that $\mathbb{Z} \supseteq \mathbb{X}$. By construction, controller gains are computed to be robust to disturbances, while as seen from equation (14), the error dynamics are a function of the coupling disturbance term. Therefore $\mathbb{W} \subseteq \mathbb{Z}$, while interconnections must also be small. In our case, $\zeta_{[i]} \equiv w_{[i]} \in A_{ij}\mathbb{X}_{[j]}$ and $\zeta_{[j]} \in A_{ji}\mathbb{X}_{[i]}$. Constructing the invariant sets, $\mathbb{Z}_{[i]} \supset A_{ij}\mathbb{X}_{[j]}$, $\mathbb{Z}_{[j]} \supset A_{ji}\mathbb{X}_{[i]}$, and since $\mathbb{X}_{[j]} \supset \mathbb{Z}_{[j]}$, the following constraint must be satisfied; $ \mathbb{X}_{[i]} \supset A_{ij}A_{ji}\mathbb{X}_{[i]}$. This requires the interconnection gain $A_{ij}A_{ji}$ to be sufficiently small.}

    \section{Conditions for global asymptotic stability via local algebraic Riccati equations}
          \label{sec:plugplay}
          This section derives a systematic approach to guaranteeing GAS, following \cite{BABANRAO2004,Pagilla2007}. Subsequent stability analysis  provides sufficient conditions for global convergence by solving a local ARE. This avoids the limiting small-gain interconnection conditions from section III.
\vspace{3mm} \newline
\textbf{Remark 3.} \textit{The MRAC now requires the state estimate measurement of neighbouring MRACs, $\hat{x}_{[j]}$. Section \ref{sec:7.2} demonstrates how it is difficult to determine GAS using this approach without the measurement of $\tilde{x}_{[j]}$. Hence, the decentralised MRAC architecture becomes a distributed architecture.}\vspace{1mm}\newline
The desired subsystem dynamics as defined by the reference model/state-predictor is rewritten as,
   \begin{equation}
   \begin{aligned}
    \hat{\Sigma}_{[i]}^{\textrm{SP}}:
    \begin{cases}
    \dot{\hat{x}}_{[i]} = \hat{A}_{m}\bar{x}_{[i]}+\bar{B}_i(u_{[i]}^{mrac}+\hat{\theta}_{[i]}^T\hat{x}_{[i]})+F\bar{E}_i\bar{d}_{[i]} + \hat{\zeta}_{[i]} \\
    \hat{y}_{[i]} = \hat{C}_i\hat{x}_{[i]}
    \end{cases}
    \label{eq:L1SPSS3}
    \end{aligned}
    \end{equation}
 The state-prediction error dynamics can be written as,
   \begin{equation}
   \begin{aligned}
   \dot{\tilde{x}}_{[i]}(t) = \dot{\bar{x}}_{[i]}(t) - \dot{\hat{x}}_{[i]}(t)   \\
   \dot{\tilde{x}}_{[i]}(t) = \hat{A}_{m}\tilde{x}_{[i]}(t)+\bar{B}\tilde{\theta}_{[i]}^T(t)\bar{x}_{[i]}(t)+\sum_{j\in\mathcal{N}_i}\hat{A}_{ij}\tilde{x}_{[j]}(t)
   \label{eqn:ErrorDyn2}
   \end{aligned}
   \end{equation}
   \vspace{3mm} \newline
   \textbf{Remark 4.} \textit{The prediction error dynamics that drive the adaptive control law are now driven by the prediction error vector of $\hat{\Sigma}_{[i]}^{\textrm{SP}}$ and $\hat{\Sigma}_{[j]}^{\textrm{SP}}$\footnote{From (\ref{eqn:adaptiveLaw2i}), $\tilde{x}_{[j]}(t)$ does not directly drive $\dot{\hat{\theta}}_{[i]}(t)$, but it does affect $\tilde{x}_{[i]}(t)$ and therefore $\dot{\tilde{x}}_{[i]}(t)$}.}
   \vspace{3mm} \newline
   \textbf{Proposition 2.}
   \textit{There exists a $P_i = P_i^T > 0$ that guarantees the GAS of (\ref{eqn:LSSMIMOcl}), if
    \begin{equation}
                  \gamma \triangleq \min\limits_{\omega \in R^+}\sigma_{min}(A_{m}-j\omega \mathbb{I}) > \sqrt{N_i\Xi^2}>0
                  \label{eqn:distance}
                   \end{equation}
   where, $\gamma$ represents the distance between the Hurwitz matrix $\hat{A}_m$ and an arbitrary marginally unstable matrix is greater than a term related to the number of interconnections $N_i$ and the upper bound on the interconnection model $\Xi^2$.} \newline
   \begin{proof} 
   In this section, a quadratic Lyapunov candidate function is used to avoid the normalising effect of the adaptive law that was seen in section III. 
   \begin{equation}
   \mathcal{V}_{[i]}(\tilde{x}_{[i]}(t), \tilde{\theta}_{[i]}(t) = \tilde{x}_{[i]}(t)^TP_{i}\tilde{x}_{[i]}(t) + \tilde{\theta}_{[i]}^T(t)\Gamma_{i}^{-1}\tilde{\theta}_{[i]}(t)
   \label{eqn:LyapVi}
   \end{equation}
   The derivative of (\ref{eqn:LyapVi}) can be written as,
   \begin{equation}
   \begin{aligned}
   \dot{\mathcal{V}}_{[i]}(t) = 2(\hat{A}_{m}\tilde{x}_{[i]}+\bar{B}\tilde{\theta}_{[i]}\bar{x}_{[i]}+\sum_{j\in\mathcal{N}_i}\hat{A}_{ij}\tilde{x}_{[j]})P_{i}\tilde{x}_{[i]} + 2\tilde{\theta}_{[i]}\Gamma_{i}^{-1}\dot{\tilde{\theta}}_{[i]}
   \label{Vi2_0}
   \end{aligned}
   \end{equation}
   which equates to,
   \begin{equation}
      \begin{aligned}
      \dot{\mathcal{V}}_{[i]} = \tilde{x}_{[i]}^T(\hat{A}_{m}^TP_{i} +P_{i}\hat{A}_{m})\tilde{x}_{[i]} +  P_{i}\tilde{x}_{[i]}^T\sum_{j\in\mathcal{N}_i}\hat{A}_{ij}\tilde{x}_{[j]}+(\sum_{j\in\mathcal{N}_i}\hat{A}_{ij}\tilde{x}_{[j]})P_{i}\tilde{x}_{[j]}
      \label{Vi2}
      \end{aligned}
      \end{equation}
   when the update law for the parameter estimate is chosen as,
   \begin{equation}
   \dot{\hat{\theta}}_{[i]} = \Gamma_{i}Proj(\hat{\theta}_{[i]}, -\tilde{x}_{[i]}^TP_{i}\bar{B}_i\bar{x}_{[i]})
   \label{eqn:adaptiveLaw2i}
   \end{equation}
   The projection operator, described in section \ref{sec:7.4}, is used to prevent parametric drift by upper-bounding the parameter estimate \textit{a priori} i.e. $\theta_{max}$, and thus provides robust adaptation.
   
   Expanding the two summation terms of (\ref{Vi2}) yields,
   \begin{equation}
   \begin{aligned}
   \tilde{x}_{[i]}^TP_{i}(\sum_{j\in\mathcal{N}_i}\hat{A}_{ij}\tilde{x}_{[j]})+(\sum_{j\in\mathcal{N}_i}\hat{A}_{ij}\tilde{x}_{[j]})^TP_{i}\tilde{x}_{[i]} =  (\hat{A}_{i1}\tilde{x}_{[1]})^TP_{i}\tilde{x}_{[1]} + \tilde{x}_{[1]}^TP_{i}(\hat{A}_{i1}\tilde{x}_{[1]})+....\\+(\hat{A}_{iM-1}\tilde{x}_{[M-1]})P_{i}\tilde{x}_{[M-1]}^T + \tilde{x}_{[M-1]}^TP_{i}(\hat{A}_{iM-1}\tilde{x}_{[M-1]})
   \label{eq:Expand}
   \end{aligned}
   \end{equation}
   where $\bar{\Sigma}_{[i]}^{\textrm{SS}} $ can have a maximum of $M-1$ neighbours.
   To achieve decentralisation, the cross-coupled $i^{th}$ state error vector and the $j^{th}$ state vector terms must be decoupled. This is done using the inequality condition in \cite{BABANRAO2004},
   \begin{equation}
    X^TY + Y^TX \leq X^TX + Y^TY
    \label{eq:ineq}
    \end{equation}
	   where $X = P_{i}\tilde{x}_{[i]}$ and $Y = \hat{A}_{ij}\tilde{x}_{[j]}$. The following is obtained using this condition.
      \begin{equation}
      \begin{aligned}
   \tilde{x}_{[i]}^TP_{i}(\hat{A}_{ij}\tilde{x}_{[j]})+(\hat{A}_{ij}\tilde{x}_{[j]})^TP_{i}\tilde{x}_{[i]} \le 
   (\hat{A}_{ij}\tilde{x}_{[j]})^T(\hat{A}_{ij}\tilde{x}_{[j]})+(P_{i}\tilde{x}_{[i]})^T(P_{i}\tilde{x}_{[i]}) = \\ \tilde{x}_{[j]}^T(\hat{A}_{ij}^T\hat{A}_{ij})\tilde{x}_{[j]} + \tilde{x}_{[i]}^TP_{i}^2\tilde{x}_{[i]}
   \label{eq:Expand2}
   \end{aligned}
   \end{equation}
   Applying this decoupling to all terms in (\ref{eq:Expand}) yields,
   \begin{equation}
   \begin{aligned}
   \tilde{x}_{[i]}^TP_{i}(\sum_{j\in\mathcal{N}_i}\hat{A}_{ij}\tilde{x}_{[j]})+(\sum_{j\in\mathcal{N}_i}\hat{A}_{ij}x_{[j]})^TP_{i}\tilde{x}_{[i]} \le  N_i\tilde{x}_{[i]}^TP_{i}^2\tilde{x}_{[i]}+
   \sum_{j\in\mathcal{N}_i}\tilde{x}_{[j]}^T(\hat{A}_{ij}^T\hat{A}_{ij})\tilde{x}_{[j]}
   \label{eq:Expand3}
   \end{aligned}
   \end{equation}
   Therefore, the decoupled terms on the right-hand-side of (\ref{eq:Expand3}) are taken as the largest interconnection terms, and substituted back into (\ref{Vi2}). 
    \begin{equation}
    \begin{aligned}
    \dot{\mathcal{V}}_{[i]} \le \tilde{x}_{[i]}^T(\hat{A}_{m}^TP_{i} +P_{i}\hat{A}_{m})\tilde{x}_{[i]} + N_i\tilde{x}_{[i]}^TP_{i}^2\tilde{x}_{[i]}+
    \sum_{j\in\mathcal{N}_i}\tilde{x}_{[j]}^T(\hat{A}_{ij}^T\hat{A}_{ij})\tilde{x}_{[j]}
    \label{Vidot3}
    \end{aligned}
    \end{equation}
The interconnection term in (\ref{Vidot3}) can be upper bounded as,
    \begin{equation}
    \sum_{j\in\mathcal{N}_i}\tilde{x}_{[j]}^T(\hat{A}_{ij}^T\hat{A}_{ij})\tilde{x}_{[j]} \le \sum_{j\in\mathcal{N}_i}\eta_{ij}^2\tilde{x}_{[j]}^T\tilde{x}_{[j]}
    \end{equation}
   where $\eta_{ji}^2 \geq \lambda_{max}(\hat{A}_{ji}^T\hat{A}_{ji})$.
The vector Lyapunov function which describes global stability can be written as,
   \begin{equation}
   \begin{aligned}
   \dot{\mathcal{V}} \leq \sum\limits_{i = 0}^{N}\tilde{x}_{[i]}^T(\hat{A}_{m}^TP_{i}+P_{i}\hat{A}_{m}+N_iP_{i}^2)\tilde{x}_{[i]} +  \sum\limits_{i = 0}^{N}(\sum_{j\in\mathcal{N}_i}\eta_{ij}^2)\tilde{x}_{[j]}^T\tilde{x}_{[j]}
   \label{eq:Vdot4}
     \end{aligned}
   \end{equation}
   The maximum eigenvalue of  $\sum_{j\in\mathcal{N}_i}\hat{A}_{ij}^T\hat{A}_{ij}$ is defined as $\xi_{i}^2 = \sum_{j\in\mathcal{N}_i}\eta_{ji}^2$ Therefore, the last term of (\ref{eq:Vdot4}) can be written as,
   \begin{equation}
    \sum_{j\in\mathcal{N}_i}\eta_{ij}^2 = \sum\limits_{j=0,j\neq i}^{N}\eta_{ij}^2 = \sum\limits_{i=0,j\neq i}^{N}\eta_{ji}^2
     \end{equation}
      The indexes are interchangeable since the upper bound of the coupling matrix applies to all subsystems and their interconnections. The interconnection term can be written as $\xi_{i}^2\sum\limits_{i = 0}^{N}\tilde{x}_{[j]}^T\tilde{x}_{[j]}$. However, the indexes do not match here. Therefore they are also interchanged so that the summation of the final term in (\ref{eq:Vdot4}) makes sense.
      
    Parametric knowledge of the interconnection model $\hat{A}_{ij}$ can be unknown. In such cases, for conservativeness i.e. worst-case design, the upper-bound of (\ref{eqn:interconnect}) is used to account for robustness to uncertainty.
      
      Defining,
      \begin{equation}
      \begin{aligned}
      \Xi^2 \triangleq \sum_{j\in\mathcal{N}_i}\lambda_{max}(\hat{A}_{ji}^T\hat{A}_{ij})=
      \sum_{j\in\mathcal{N}_i}\lambda_{max}(\hat{A}_{ij}^2)
        \end{aligned}
      \end{equation}
      since the upper-bound on $\hat{A}_{ij}$ is the same as the upper-bound $\hat{A}_{ji}$.
Finally, (\ref{eq:Vdot4}) can be written as, 
   \begin{equation}
   \begin{aligned}
   \dot{\mathcal{V}} \leq \sum\limits_{i = 0}^{N}\tilde{x}_{[i]}^T(\hat{A}_{m}^TP_{i}+P_{i}\hat{A}_{m}+N_iP_{i}^2)\tilde{x}_{[i]} +  \Xi^2\sum\limits_{i = 0}^{N}\tilde{x}_{[i]}^T\tilde{x}_{[i]} \\
   \leq \sum\limits_{i = 0}^{N}\tilde{x}_{[i]}^T(\hat{A}_{m}^TP_{i}+P_{i}\hat{A}_{m}+N_iP_{i}^2+\Xi^2\mathbb{I}_{n_i\textrm{x}n_i})\tilde{x}_{[i]}
   \end{aligned}
   \label{eq:Vdot5}
   \end{equation}
   
   The global Lyapunov function of (\ref{eq:Vdot5}) resembles an ARE. The following two Lemma's are important in establishing sufficient conditions that ensure stability of the overall LSS for the proposed decentralised adaptive controllers.
   \vspace{3mm} \newline  
   \textbf{Lemma 1.} \textit{Considering the ARE defined as:}
   \begin{equation}
   A^TP+PA + PRP + Q = 0
   \label{eqn:ARE}
   \end{equation}
   \textit{If A is Hurwitz, $R = R^T > 0$, $Q = Q^T > 0$, and the associated Hamiltonian matrix $\mathcal{H} = \left[ \begin{array}{cc}
   A & R \\
   -Q & -A^T
   \end{array} \right]$ is hyperbolic, i.e. $\mathcal{H}$ has no eigenvalues that lie on the imaginary axis, then there exists $P = P^T > 0$ that solves the ARE of (\ref{eqn:ARE}).
   In this case, $A = \hat{A}_{m}$, $Q = \Xi^2\mathbb{I}_{n_i\textrm{x}n_i}$ and $R = N_i\mathbb{I}_{n_i\textrm{x}n_i}$.}
   \vspace{3mm} \newline  
   \textbf{Lemma 2.} \textit{
   The Hamiltonian matrix $\mathcal{H}_i = \left[ \begin{array}{cc}
   \hat{A}_{m} & N_i\mathbb{I}_{n_i\textrm{x}n_i}\\
   -\Xi^2\mathbb{I}_{n_i\textrm{x}n_i} & -\hat{A}_{m}^T
   \end{array} \right]$ is hyperbolic if and only if:}
   \begin{equation}
   \min\limits_{\omega \in R^+}\sigma_{min}(\hat{A}_{m}-j\omega \mathbb{I}_{n_i\textrm{x}n_i}) > \Xi\sqrt{N_i}>0
   \label{eq:distance1}
   \end{equation}
   \textit{Proof.}
   The proof can be found in section \ref{sec:7.3}.
   
 A linear bisection method for solving this distance and ensuring (\ref{eqn:distance}) is described in \cite{Byers1988,Aboky2002}, and is summarised in section \ref{sec:7.5}.
   
   If $\mathcal{H}_i$ is hyperbolic, there exists $\varepsilon_i > 0$ such that,
   \begin{equation}
   \mathcal{H}_i = \left[ \begin{array}{cc}
   \hat{A}_{m} & N_i\mathbb{I}_{n_i\textrm{x}n_i}\\
   -(\Xi^2+ \varepsilon_i)\mathbb{I}_{n_i\textrm{x}n_i} & -\hat{A}_{m}^T
   \end{array} \right]
   \end{equation}
   is also hyperbolic. That is, if $f_i(\xi_{i}^2) := \min\limits_{\omega \in R^+}\sigma_{min}(\hat{A}_{m}-j\omega \mathbb{I}_{n_i\textrm{x}n_i}) - \Xi\sqrt{N_i}>0$, then there exists  $\varepsilon_i > 0$ such that $f_i(\Xi^2+\varepsilon_i) > 0$.
   
   The ARE of Lemma 1 can be written as,
   \begin{equation}
   \hat{A}_{m}^TP_{i}+P_{i}\hat{A}_{m} + P_{i}N_iP_{i} + (\Xi^2+\varepsilon_i)\mathbb{I}_{n_i\textrm{x}n_i} = 0
   \end{equation}
   and is solvable with $P_{i} = P_{i}^T > 0$ if Lemma's 1 and 2 are satisfied. The overall vector Lyapunov function can be represented as,
   \begin{equation}
   \begin{aligned}
   \dot{\mathcal{V}} \leq \sum\limits_{i = 0}^{N}\tilde{x}_{[i]}^T(\hat{A}_{m}^TP_{i}+P_{i}\hat{A}_{m}+N_iP_{i}^2+(\Xi^2+\varepsilon_i)\mathbb{I}_{n_i\textrm{x}n_i})\tilde{x}_{[i]}
   \leq -\sum\limits_{i = 0}^{N}\tilde{x}_{[i]}^T\varepsilon_{[i]}\tilde{x}_{[i]}
   \end{aligned}
   \label{eq:Vdot6}
   \end{equation}
Again, this has a guaranteed solution, with a positive-definite $P_{i}$ if Lemma's 1 and 2 are satisfied. Subsequently, according to Barbalat's Lemma, the error dynamics are bounded and $\lim\limits_{t\rightarrow \infty}\tilde{x}_{[i]} = 0$.
\end{proof}

Ultimately, the distributed robust-MRAC architecture can be deployed in local subsystems of linear MIMO LSS with strong-interactions and guarantee GAS, as seen in section V-C. As with all Lyapunov stability criterion, satisfying (\ref{eqn:distance}) is a sufficient but not necessary stability condition. However, conservativeness can be relaxed by tight satisfaction of (\ref{eqn:distance}) when specifying  the closed-loop dynamics via the MRAC.

     \section{Results}
   In general, power systems are increasingly becoming decentralised and distributed \cite{Anuradha2013}. Therefore, DC power distribution systems are an effective application to evaluate the distributed control architecture. As proposed in \cite{Tucci2016c,OKeeffe2018e}, 6 heterogeneous subsystems provide power to local loads in an interconnected electrical DC power network. Each subsystem is equipped with controllers $\mathcal{C}_{[i]}, i = 1,...,6$, for voltage stabilisation, where the robust-MRAC is an $\mathcal{L}_1$ adaptive controller. Parameter values defined in Table I of \cite{OKeeffe2018e} are used.
   
   \subsection{Failure of connective stability condition with large-gain interconnections}
  The small-gain theorem is a commonly used formulation to determine input-output stability of an interconnected non-linear system \cite{Khalil2002}. 
  \begin{figure}[!htb]    
  \centering
  \includegraphics[width=9cm]{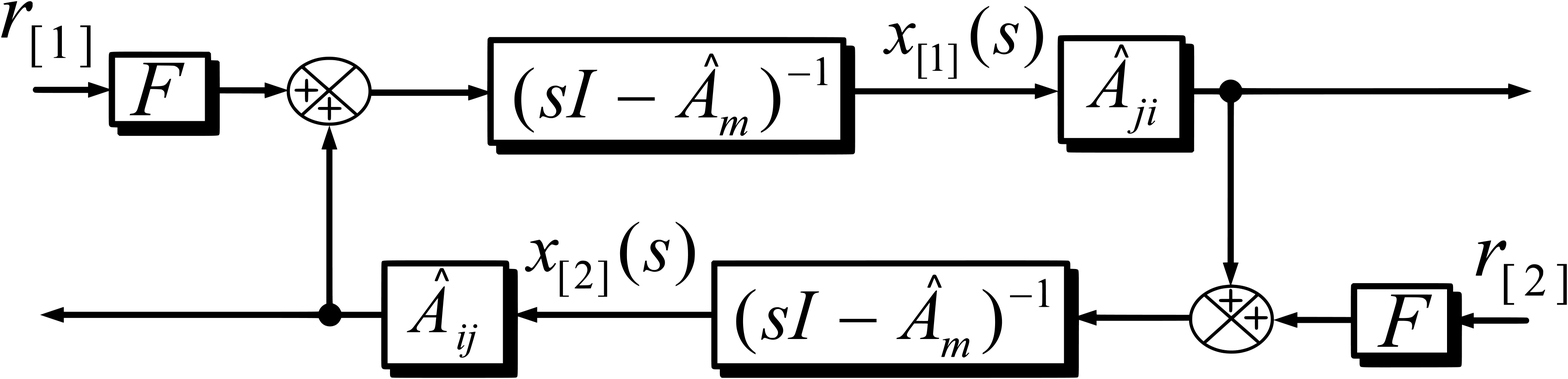}
  \caption{Feedback connection between two interconnected subsystems.}
  \label{fig:SmallGainRep}
  \end{figure}
  Using Fig.\ref{fig:SmallGainRep}, the theorem states that the feedback connection between the two interconnected subsystems\footnote{Both systems must be individually stable i.e. an unstable open-loop system must be closed-loop stable.} is input-output stable if the product of the individual infinity-norms of each converter is less than 1. That is,
 \begin{equation}
 ||\hat{A}_{12}{
 (s\mathbb{I}-\hat{A}_{m})^{-1} ||_{\infty}.||\hat{A}_{21}}(s\mathbb{I}-\hat{A}_{m})^{-1}||_{\infty} < 1
 \label{eqn:SmallGainCond}
 \end{equation}
 Note that this assumes that adaptation and convergence to desired dynamics has occurred.
 Consider the following system where subsystem $\bar{\Sigma}_{1}^{\textrm{SS}}$ is coupled with subsystem $\bar{\Sigma}_{2}^{\textrm{SS}}$. System matrices are derived from \cite{OKeeffe2018e}.
    \begin{equation*}
    \centering
    \begin{aligned}
        \hat{A}_{m}
         =
        \left[ \begin{array}{ccc}
        -3.51\textrm{x}10^6 & 4\textrm{x}10^3 & 1.13\textrm{x}10^6 \\
        5.12\textrm{x}10^6 & -9\textrm{x}10^4 & -1.65\textrm{x}10^6 \\
        0 & -1 & 0 \\
        \end{array} \right]
        \hat{A}_{12} =
                \left[ \begin{array}{ccc}
                 0 & 0 & 0\\
                 0 & 5.32\textrm{x}10^4 & 0 \\
                 0 & 0 & 0 \\
                 \end{array} \right]
            \hat{A}_{21} =
             \left[ \begin{array}{ccc}
                                    0 & 0 & 0\\
                                    0 & 3.87\textrm{x}10^4 & 0 \\
                                    0 & 0 & 0 \\
                                    \end{array} \right]
                  \\ \bar{B}_1
         =
        \left[ \begin{array}{ccc}
       1.34\textrm{x}10^7 \\
       -8.12\textrm{x}10^5 \\
        0
        \end{array} \right]
        \bar{B}_2
                 =
                \left[ \begin{array}{ccc}
                4.25\textrm{x}10^6 \\
                -5.6\textrm{x}10^5 \\
                0
                \end{array} \right]        
      \hat{C} = 
         \left[ \begin{array}{ccc}
          0 & 1 & 0 \\
          \end{array} \right]
          \end{aligned}
        \end{equation*} 
       
Consequently, the small-gain condition is,
  \begin{equation}
  (5.32\textrm{x}10^4)(3.87\textrm{x}10^4) \nless 1
  \end{equation}
which clearly is not satisfied. This two coupled subsystem example is effectively a large-gain system. Generally, this can be said for an $M$ subsystem DC power network, which results in the failure of the collective stability method for strongly-interacting subsystems .   
  
By setting $Q_{1} = \mathbb{I}_{3\textrm{x}3}$, the positive-definite matrix $P_{1}$ is solved for, 
\begin{equation*}
P_{1} =
1\textrm{x}10^4\left[ \begin{array}{ccc}
        1.4 & 0.98 & 0.25\\
        0.98 & 0.8 & 0.21 \\
        0.25 &  0.21 & 1.61 \\
        \end{array} \right] \\
\end{equation*}
where $\varepsilon_i = 1\textrm{x}10^3$,  $\lambda_{max}(P_{1}) = 2.28x\textrm{x}10^4$ and $\lambda_{min}(P_{1}) = 723.2$.

The GAS condition of (\ref{eqn:cond1}) can now be checked.
 \begin{equation}
 \frac{1}{2(2.28\textrm{x}10^4)} \ngtr \frac{2.28\textrm{x}10^4}{723.2}1.1\textrm{x}10^5
 \label{eqn:cond2}
 \end{equation}
Clearly, the left-hand-side of (\ref{eqn:cond2}) is not greater than the right-hand-side as required by (\ref{eqn:cond1}).

Ultimately, from the interconnection matrix $\hat{A}_{ij}$, the large-gain leads to large upper-bounds. One could attempt to increase $\lambda_{min}(Q_{1})$, however, as the system is linear, $\lambda_{min}(P_{1})$ and $\lambda_{max}(P_{1})$ will increase accordingly. This shows that the connective stability method is only suitable for small-gain interconnections.

\textbf{Remark 5.} \textit{Section \ref{sec:7.1} analytically shows that a typical power electronic design feature to improve local damping does not contribute to improving global stability.}

\subsection{Simulation of DC power network with distributed control architecture}

Again, the DC power network of \cite{OKeeffe2018e}
is used to demonstrate the effectiveness of distributed $\mathcal{L}_1$ adaptive controllers at guaranteeing GAS, when designed as described in section IV. 

The topology of the power network is of meshed configuration.
\begin{figure}[!htb]    
\graphicspath{ {Images/} }
\centering
\includegraphics[width=8cm]{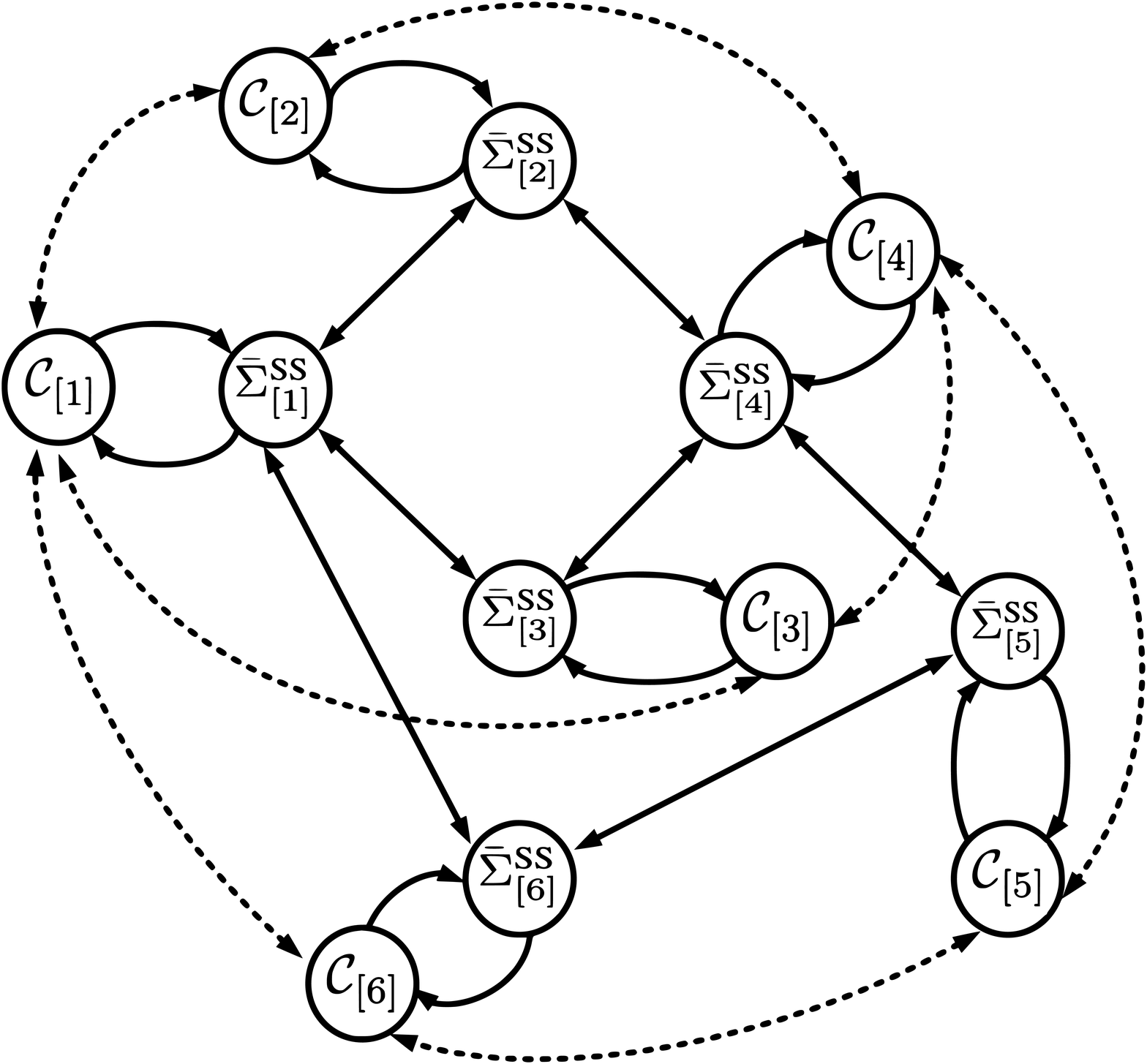}
\caption{Large-scale interconnected DC power network with low-bandwidth communications (dotted).} 
\label{fig:TestConfig}
\end{figure}
All subsystems have different output references which allows for interaction between the interconnections.

A load disturbance step of 2 kW - 3.8 kW is applied to $\bar{\Sigma}_{[2]}^{\textrm{SS}}$. 
Fig.\ref{fig:TestConfig} shows that each interconnected subsystem is affected by the load disturbance.
\begin{figure}[!htb]    
\centering
\includegraphics[width=9cm]{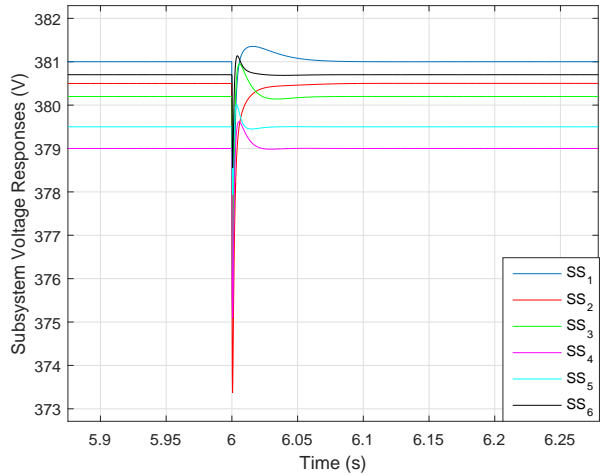}
\caption{Subsystem voltage responses for  $\bar{\Sigma}_{[2]}^{\textrm{SS}}$ load change 2 kW - 3.8 kW.}
\label{fig:DGU2LoadChange}
\end{figure}
The distributed adaptive control architecture maintains GAS and fast performance. The conservativeness of using an upper-bound on the interconnection models for all subsystems does not appear to slow the system response.
    \section{Conclusion}
    The aim of this paper was to overcome the problem of guaranteeing GAS when decentralised and distributed control architectures are employed in general linear MIMO LSS. The paper extended the conventional connective stability method by incorporating model reference adaptive control structures. Using a DC power network, the method was numerically tested. As a result, the method is shown to be insufficient in the presence of large-gain interconnections. Furthermore, the method does not ensure convergent adaptive control laws as it relies on a non-quadratic Lyapunov function. 
    
    A distributed control architecture was presented to overcome the small-gain limitation. Distributed $\mathcal{L}_1$ adaptive controllers are used to augment state-feedback baseline controllers. 
    By ensuring the Hamiltonian matrix associated with each local ARE is hyperbolic, GAS is sufficiently guaranteed when conditions dependent on the desired dynamics, number of interconnections and the upper-bound of interconnection models are satisfied. Structurally, each subsystem at most requires the measurement of neighbouring predictor errors. This caveat does not impede the scalability of the architecture as the communication topology is neighbour-to-neighbour i.e. same topology as the interconnection graph. 
    
    The distributed control architecture is tested on the same DC power network, where GAS and fast performance are maintained during a load disturbance to one of the subsystems. Future work will evaluate the proposed architecture in response to more realistic DC power system settings including; bus-connected topologies, plug-and-play operations, reference tracking, and interaction with coordination control layers.

    \section{Appendix}
    \subsection{Matrices of General Linear MIMO System Model}\label{sec:7.1}
    The matrices of (\ref{eqn:LSSMIMO}) are defined as,
    \begin{equation}
\underbrace{\left[\begin{array}{c}
\dot{x}_{[1]} \\
\dot{x}_{[2]} \\
\dot{x}_{[3]} \\
\vdots \\
\dot{x}_{[M]} 
\end{array} \right]}_{\dot{\textbf{x}}}=
\underbrace{
\left[\begin{array}{ccccc}
A_{11} & A_{12} & A_{13}  & \cdots & A_{1M} \\
A_{21} & A_{22} & A_{23}  & \cdots & A_{2M} \\ 
A_{31} & A_{32} &  A_{33}  & \cdots & A_{3M} \\
\vdots & \vdots & \vdots & \ddots & \vdots
\\
A_{M1} & A_{M2} & A_{M3} & \cdots & A_{MM}
\end{array} \right]}_{\textbf{A}}
\underbrace{\left[\begin{array}{c}
x_{[1]} \\
x_{[2]} \\
x_{[3]} \\
\vdots \\
x_{[M]} 
\end{array} \right]}_{\textbf{x}} + \underbrace{
\left[\begin{array}{ccccc}
B_{1} & 0 & 0 & \cdots  & 0 \\
0 & B_{2} & 0 & \ddots  & \vdots  \\ 
0 & 0 &  B_{3} & \ddots & \vdots \\
\vdots & \ddots & \ddots & \cdots & 0
\\
0 & \cdots & 0 & 0 & B_{M} 
\end{array} \right]}_{\textbf{B}}
\underbrace{\left[\begin{array}{ccccc}
u_{[1]} \\
u_{[2]} \\
u_{[3]} \\
\vdots \\
u_{[N]} 
\end{array} \right]}_{\textbf{u}} 
\\
+ \underbrace{
\left[\begin{array}{ccccc}
E_{1} & 0 & 0 & \cdots  & 0 \\
0 & E_{2} & 0 & \ddots  & \vdots  \\ 
0 & 0 &  E_{3} & \ddots & \vdots \\
\vdots & \ddots & \ddots & \cdots & 0
\\
0 & \cdots & 0 & 0 & E_{M} 
\end{array} \right]}_{\textbf{E}}
\underbrace{\left[\begin{array}{ccccc}
d_{[1]} \\
d_{[2]} \\
d_{[3]} \\
\vdots \\
d_{[M]} 
\end{array} \right]}_{\textbf{d}} 
\hspace{2mm}
;
\hspace{2mm}
\underbrace{\left[\begin{array}{ccccc}
y_{[1]} \\
y_{[2]} \\
y_{[3]} \\
\vdots \\
y_{[M]} 
\end{array} \right]}_{\textbf{y}} = 
\underbrace{
\left[\begin{array}{ccccc}
C_{1} & 0 & 0 & \cdots  & 0 \\
0 & C_{2} & 0 & \ddots  & \vdots  \\ 
0 & 0 &  C_{3} & \ddots & \vdots \\
\vdots & \ddots & \ddots & \cdots & 0
\\
0 & \cdots & 0 & 0 & C_{M} 
\end{array} \right]}_{\textbf{C}} 
\underbrace{\left[\begin{array}{c}
x_{[1]} \\
x_{[2]} \\
x_{[3]} \\
\vdots \\
x_{[M]} 
\end{array} \right]}_{\textbf{x}}
\\
+\underbrace{
\left[\begin{array}{ccccc}
D_{1} & 0 & 0 & \cdots  & 0 \\
0 & D_{2} & 0 & \ddots  & \vdots  \\ 
0 & 0 &  D_{3} & \ddots & \vdots \\
\vdots & \ddots & \ddots & \cdots & 0
\\
0 & \cdots & 0 & 0 & D_{M} 
\end{array} \right]}_{\textbf{D}} 
\underbrace{\left[\begin{array}{c}
u_{[1]} \\
u_{[2]} \\
u_{[3]} \\
\vdots \\
u_{[M]} 
\end{array} \right]}_{\textbf{u}} 
\end{equation}
    \subsection{Guaranteeing GAS without measurement of neighbouring prediction errors}\label{sec:7.2}
    The Lyapunov function of  (\ref{eqn:LyapVi}) is used. The decoupled state-predictor of (\ref{eq:L1SPSS2}) is also used. 
 The state-prediction error dynamics are defined as,
    \begin{equation}
    \dot{\tilde{x}}_{[i]}(t) = \hat{A}_{m}\tilde{x}_{[i]}(t)+\bar{B}_{i}\tilde{\theta}_{[i]}^T(t)\bar{x}_{[i]}(t)+\sum_{j\in\mathcal{N}_i}\hat{A}_{ij}\bar{x}_{[j]}(t)
       \label{eqn:ErrorDyn3}
       \end{equation}
    \newline
    \textbf{Remark 6.} \textit{The difference here is that the state error dynamics are directly a function of the neighbouring states $\bar{x}_{[j]}(t)$ rather than the neighbouring prediction error $\tilde{x}_{[j]}(t)$.}
    \vspace{3mm} \newline
The derivative of the local Lyapunov function is,
    \begin{equation}
       \begin{aligned}
       \dot{\mathcal{V}}_{[i]} = \tilde{x}_{[i]}^T(\hat{A}_{m}^TP_{i} +P_{i}\hat{A}_{m})\tilde{x}_{[i]} + P_{i}\tilde{x}_{[i]}^T\sum_{j\in\mathcal{N}_i}\hat{A}_{ij}x_{[j]}+(\sum_{j\in\mathcal{N}_i}\hat{A}_{ij}\tilde{x}_{[j]})P_{i}x_{[j]}
       \label{Vidot3_1}
       \end{aligned}
       \end{equation}
       The update law for the parameter estimate is (\ref{eqn:adaptiveLaw2i}), as before.
    Expanding the two summation terms of (\ref{Vidot3_1}) yields,
       \begin{equation}
       \begin{aligned}
       \tilde{x}_{[i]}^TP_{i}(\sum_{j\in\mathcal{N}_i}\hat{A}_{ij}\tilde{x}_{[j]})+(\sum_{j\in\mathcal{N}_i}\hat{A}_{ij}\tilde{x}_{[j]})^TP_{i}\tilde{x}_{[i]} =  (\hat{A}_{i1}\tilde{x}_{[1]})^TP_{i}\tilde{x}_{[1]} + \tilde{x}_{[1]}^TP_{[1]}(\hat{A}_{i1}\tilde{x}_{[1]})+....\\+(\hat{A}_{iN-1})\tilde{x}_{[N-1]}P_{[1]}\tilde{x}_{[N-1]}^T + \tilde{x}_{[N-1]}^TP_{[1]}(\hat{A}_{iN-1}\tilde{x}_{[N-1]})
       \label{eq:Expand2}
       \end{aligned}
       \end{equation}
 Recalling the inequality of (\ref{eq:ineq}) we get,
    \begin{equation*}
    \begin{aligned}
    (\hat{A}_{ij}x_{[j]})^TP_{i}\tilde{x}_{[i]} + \tilde{x}_{[i]}^TP_i(\hat{A}_{ij}x_{[j]}) \leq \tilde{x}_{[i]}^TP_{i}^2\tilde{x}_{[i]}+ x_{[j]}^T(\hat{A}_{ij}^T\hat{A}_{ij})x_{[j]}
    \end{aligned}
    \end{equation*}
    Or, 
    \begin{equation}
    \begin{aligned}
    (\sum_{j\in\mathcal{N}_i}\hat{A}_{ij}x_{[j]})^TP_{i}\tilde{x}_{[i]} + \tilde{x}_{[i]}^TP_i(\sum_{j\in\mathcal{N}_i}\hat{A}_{ij}x_{[j]}) \leq  N_i\tilde{x}_{[i]}^TP_{i}^2\tilde{x}_{[i]} + \sum_{j\in\mathcal{N}_i}x_{[j]}^T(\hat{A}_{ij}^T\hat{A}_{ij})x_{[j]}
    \end{aligned}
    \end{equation}
Following the same procedure from section III-A i.e. using  $\tilde{x}_{[i]} = \bar{x}_{[i]} - \hat{x}_{[i]}$, the equations (42)-(44) and Lemma's 1 and 2, the overall Lyapunov function can be written as,
     \begin{equation}
       \begin{aligned}
       \dot{\mathcal{V}} \leq \sum\limits_{i = 0}^{N}\bar{x}_{[i]}^T(\hat{A}_{m}^TP_{i}+P_{i}\hat{A}_{m}+N_iP_{i}^2+(\Xi^2\mathbb{I}_{n_i\textrm{x}n_i}))\bar{x}_{[i]}-
       \hat{x}^T_{[i]}(\hat{A}_{m}^TP_{i}+P_{i}\hat{A}_{m}+N_iP_{i}^2+(\Xi^2\mathbb{I}_{n_i\textrm{x}n_i}))\hat{x}_{[i]} \\\dot{\mathcal{V}}\leq
       -\sum\limits_{i = 0}^{N} \bar{x}_{[i]}^T\varepsilon_i\bar{x}_{[i]}+ \hat{x}_{[i]}^T(-(\Xi^2+\varepsilon_i)\mathbb{I}_{n_i\textrm{x}n_i})\hat{x}_{[i]}
       \leq -\sum\limits_{i = 0}^{N}\tilde{x}_{[i]}^T\varepsilon_i\tilde{x}_{[i]}-\hat{x}_{[i]}^T\Xi^2\hat{x}_{[i]}
       \end{aligned}
       \label{eq:Vdot8}
       \end{equation}
For (\ref{eq:Vdot8}) to be negative semi-definite, $\tilde{x}_{[i]}^T\varepsilon_i\tilde{x}_{[i]} > \hat{x}_{[i]}^T\Xi^2\hat{x}_{[i]}$. Ideally $\tilde{x}_{[i]}^T\varepsilon_i\tilde{x}_{[i]} \not> \hat{x}_{[i]}^T\Xi^2\hat{x}_{[i]}$. Hence, as expected it is difficult to guarantee GAS through local adaptive controllers. This problem is associated with the unmatched interconnections.
    
    \subsection{Proof of Lemma 2}\label{sec:7.3}
    The proof of this Lemma is an altered version of theorem 2 in \cite{Rajamani1998}, which provides sufficient conditions for ensuring asymptotic stability of an observer. The proof is performed in 3 parts.
    \begin{proof}
    
    \textbf{\textit{Part I:}} If $\min\limits_{\omega \in R^+}\sigma_{min}(\hat{A}_{m}-j\omega \mathbb{I}_{n_i\textrm{x}n_i}) > \Xi\sqrt{N_i}$ then there exists $\varepsilon_i > 0$ such that the matrix $\mathcal{H}_i = \left[ \begin{array}{cc}
    \hat{A}_{m} & N_i\mathbb{I}_{n_i\textrm{x}n_i}\\
    -(\Xi^2+ \varepsilon_i)\mathbb{I}_{n_i\textrm{x}n_i} & -\hat{A}_{m}^T
    \end{array} \right]$ is Hamiltonian, i.e. no eigenvalues on the imaginary axis.
    
    Since $\lim\limits_{\omega\rightarrow \infty}\sigma_{min}(\hat{A}_{m}-j\omega \mathbb{I}_{n_i\textrm{x}n_i}) = \infty $, there exists a finite $\omega \in R^+$ such that,
    \begin{equation}
     \min\limits_{\omega \in R^+}\sigma_{min}(\hat{A}_{m}-j\omega \mathbb{I}_{n_i\textrm{x}n_i}) = (\Xi\sqrt{N_i})_{min} 
     \end{equation} 
    that is, the minimum of all possible eigenvalues across all $\omega$ equals the minimum eigenvalues of $(\hat{A}_{m} - j\omega \mathbb{I}_{n_i\textrm{x}n_i}) $ at $\omega \in R^+$ which equals $(\Xi\sqrt{N_i})_{min}$. Hence, for all $\omega$, 
    \begin{equation}
    (\hat{A}_{m}-j\omega \mathbb{I}_{n_i\textrm{x}n_i})^*(A_{m_{i}}-j\omega \mathbb{I}_{n_i\textrm{x}n_i}) \geq (\Xi\sqrt{N_i})_{min}^2
    \end{equation}
    where, * denotes a Hermitian matrix, i.e. a transposed and complex conjugated matrix, or $A^* = \bar{A}^T$. Then, if $\min\limits_{\omega \in R^+}\sigma_{min}(A_m-j\omega \mathbb{I}_{n_i\textrm{x}n_i}) > \Xi\sqrt{N_i}$, $(\Xi\sqrt{N_i})_{min} > \Xi\sqrt{N_i}$. Therefore, we can choose $\varepsilon_i$ such that $(\Xi\sqrt{N_i})^2(\mathbb{I}_{n_i\textrm{x}n_i}+\varepsilon_i) < (\Xi\sqrt{N_i})_{min}^2$, in order to obtain,
    \begin{equation}
    \begin{aligned}
    (\hat{A}_{m}-j\omega \mathbb{I}_{n_i\textrm{x}n_i})^*(\hat{A}_{m}-j\omega \mathbb{I}_{n_i\textrm{x}n_i}) > (\Xi\sqrt{N_i})^2\mathbb{I}_{n_i\textrm{x}n_i} + (\Xi\sqrt{N_i})^2\varepsilon_i \mathbb{I}_{n_i\textrm{x}n_i}
    \end{aligned}
    \end{equation}
    
    Finally, part I can be proved by contradiction. Assuming that $\mathcal{H}_i$ has eigenvalues on the imaginary axis, where purely imaginary eigenvalues are represented as $s = j\omega$. The eigenvalues of $\mathcal{H}_i$ are given as,
    \begin{equation}
    \begin{aligned}
    det\left[ \begin{array}{cc}
    s\mathbb{I}_{n_i\textrm{x}n_i} - \hat{A}_{m} & -N_i\mathbb{I}_{n_i\textrm{x}n_i}\\
    (\Xi^2+ \varepsilon_i)\mathbb{I}_{n_i\textrm{x}n_i} & s\mathbb{I}_{n_i\textrm{x}n_i}+\hat{A}_{m}^T
    \end{array} \right]
    =0
    \\det[(s\mathbb{I}_{n_i\textrm{x}n_i} - \hat{A}_{m})(s\mathbb{I}_{n_i\textrm{x}n_i}+\hat{A}_{m}^T)+N_i\mathbb{I}_{n_i\textrm{x}n_i}(\Xi^2+ \varepsilon_i)]=0
    \end{aligned}
    \label{eqn:det}
    \end{equation}
    However, from (\ref{eqn:det}), $det[(s\mathbb{I}_{n_i\textrm{x}n_i} - \hat{A}_{m})(s\mathbb{I}_{n_i\textrm{x}n_i}+\hat{A}_{m}^T)+N_i\mathbb{I}_{n_i\textrm{x}n_i}(\Xi^2+ \varepsilon_i)]\not=0$. Therefore, $\mathcal{H}_i$ does not have imaginary axis eigenvalues, i.e. is $\mathcal{H}_i$ hyperbolic.
   \newline  
    \textbf{\textit{Part II:}} If $\mathcal{H}_i$ is hyperbolic, and $\hat{A}_{m}$ is Hurwitz, then there exists a symmetric positive-definite
    solution $P_{i}$ to the ARE,
    $$\hat{A}_{m}^TP_{i} +P_{i}\hat{A}_{m} + P_{i}N_iP_{i}+(\Xi^2+ \varepsilon_i)\mathbb{I}_{n_i\textrm{x}n_i} = 0$$
    \textit{Proof:}
    From $H_\infty$ control theory (chapter 7 of \cite{Francis1987}), if the matrix
    $$F := \left[ \begin{array}{cc}
    A & R\\
    Q & -A^T
    \end{array} \right]$$
    is hyperbolic, $R$ is either positive semi-definite or negative semi-definite, and ($A, R$) is stabilisable, then there exists a symmetric solution to the ARE,
    \begin{equation}
    A^TX + XA + XRX - Q = 0
    \end{equation}
    Furthermore, as $A$ is Hurwitz, $X$ is positive definite.
    \newline  
    \textbf{\textit{Part III:}} The existence of a positive-definite matrix $P_{i}$ that solves the ARE, ensures asymptotically stability. This can easily be proved by general Lyapunov theory of linear systems. 
    \end{proof}
    
    \subsection{Projection operator}\label{sec:7.4}
    Projection-based update laws for adaptation are commonly used in adaptive systems to bound estimates and prevent parameter drift. Properties of the projection operator are summarised following \cite{Yoo2010,L12010,Lavretsky2011}.
  \newline
    \textbf{Definition 1.} \textit{A set, defined as $\Omega \in \mathbb{R}^n$, is a convex set if for all values within this set i.e. $x \in \Omega$ and $y \in \Omega$, the following holds,}
    \begin{equation}
    \lambda x + (1-\lambda)y \in \Omega
    \end{equation}
    \textit{where, $0 < \lambda < 1$. That this,  all points connecting a straight line drawn between $x$ and $y$ remain within $\Omega$. }
    \newline
    \textbf{Definition 2.} \textit{A function $g:\mathbb{R}^n\rightarrow \mathbb{R}$, i.e. $g$ is a function on the set of real numbers of $n$ dimension into the set of real scalars, is convex if,}
    \begin{equation}
    g(\lambda x + (1-\lambda)y) \leq g(\lambda x) + g((1-\lambda)y)
    \end{equation}\textbf{Definition 3.}\textit{ Defining a convex set with a smooth boundary given by,}
  \begin{equation}
    \Omega_c \triangleq \{\theta \in \mathbb{R}^{n}\hspace{1mm}|\hspace{1mm} g(\theta)<\delta \}\hspace{3mm}
    ;\hspace{3mm} 0<\delta<1
    \end{equation}
    \textit{where, $g:\mathbb{R}^n\rightarrow\mathbb{R}$ is the convex function,}
    \begin{equation}
    g(\theta) \triangleq \frac{(\epsilon_0+1)\theta^T\theta-\theta_{max}^2}{\epsilon_0\theta_{max}^2}
    \label{eqn:g}
    \end{equation} 
    $\theta_{max}$ is the 1-norm bound of the parametric estimate vector $\theta$ which bounds the projection operation, and $\epsilon_0$ is the arbitrary projection tolerance bound which determines how close to the bound that scaling takes place. The projection operator is defined as,
    \begin{equation}
    \begin{aligned}
    Proj(\hat{\theta}_{[i]},-\tilde{x}_{[i]}^TP_{i}\bar{B}_i\bar{x}_{[i]})) \triangleq 
                   \begin{cases}
                  -\tilde{x}_{[i]}^TP_{i}\bar{B}_i\bar{x}_{[i]}, \hspace{72mm} $if$ \hspace{1.5mm} g(\theta) < 0 \\
                   -\tilde{x}_{[i]}^TP_{i}\bar{B}_i\bar{x}_{[i]}, \hspace{63.5mm} $if$ \hspace{1.5mm} g(\theta) > 0 \hspace{1.5mm} $, and$ \\
         \hspace{73mm} \hspace{1.5mm} \nabla g^T(-\tilde{x}_{[i]}^TP_{i}\bar{B}_i\bar{x}_{[i]}) \leq 0 \\
                   -\tilde{x}_{[i]}^TP_{i}\bar{B}_i\bar{x}_{[i]}-\frac{\nabla g}{||\nabla g||}(\frac{\nabla g}{||\nabla g||}.(-\tilde{x}^T_{[i]}P_{i}\bar{B}_i\bar{x}_{[i]})g(\theta),\hspace{10.5mm} $if$ \hspace{1.5mm} g(\theta) \geq 0 \hspace{1.5mm} $, and$ 
                   \\
                   \hspace{73.5mm} \nabla g^T(-\tilde{x}_{[i]}^TP_{i}\bar{B}_i\bar{x}_{[i]})\geq 0
                   \end{cases}
                   \label{eqn:projection}
                   \end{aligned}
                   \end{equation}
    From (\ref{eqn:g}), for the case when $g(\theta) \leq 1$,
    $$\theta^T\theta \leq \theta_{max}^2$$
    This shows that $\theta_{max}$ bounds the parameter estimate.
    For the case when $g(\theta) \geq 0$, $$\theta^T\theta  \geq \frac{\theta_{max}^2}{\epsilon_0+1}$$
    When $g(\theta) \geq 0$, $\theta$ becomes a scaled version of $\theta_{max}$; scaling begins at $\epsilon_0$.
    \vspace{3mm} \newline
    \textbf{Property 1.} \textit{The projection operator does not change the estimate, i.e. $\dot{\hat{\theta}}_{[i]} = 0$, if $g(\theta) \leq 0$ or if $g(\theta) > 0$ and $\nabla g^T(-\tilde{x}_{[i]}^T(t)P_{i}\bar{B}_i\bar{x}_{[i]}(t)) \leq 0$ i.e. a decreasing gradient. If $0 < g(\theta) < 1$ and $\nabla g^T(-\tilde{x}_{[i]}(t)^TP_{i}\bar{B}_i\bar{x}_{[i]}(t)) > 0$, i.e. the gradient of $g(\theta)$ is increasing, then the projection operator subtracts a vector normal to the boundary level set by $\theta_{max}$, as shown in (76).}
    \vspace{1mm} \newline
    For the Lyapunov function of (\ref{eqn:LyapVi}) to be at least negative the the following property is required.
    \vspace{3mm} \newline
    \textbf{Property 2.} \textit{Given the vector $\alpha \in \mathbb{R}^n$, $\theta_{[i]}^* \in \Omega_0$ is the uncertain parameter that is being estimated, and $\hat{\theta}_{[i]} \in \Omega_0$ is the estimate, then}
    \begin{equation}
    (\hat{\theta}_{[i]}-\theta_{[i]}^*)^T(Proj(\hat{\theta}_{[i]},\alpha)-\alpha) \leq 0
    \end{equation} \newline
    Indeed we have,
    \begin{equation}
    (\hat{\theta}_{[i]}-\theta_{[i]}^*)(\tilde{x}_{[i]}^TP_{i}\bar{B}_i\bar{x}_{[i]}+Proj(\dot{\hat{\theta}}_{[i]},\tilde{x}_{[i]}^TP_{i}\bar{B}_i\bar{x}_{[i]}(t)))
    \end{equation}
    which, using equation (76), properties 1 and 2 yields,
    \begin{equation}
    \begin{aligned}
    (\hat{\theta}_{[i]}-\theta_{[i]}^*)(\tilde{x}_{[i]}^TP_{i}\bar{B}_i\bar{x}_{[i]})+Proj(\dot{\hat{\theta}}_{[i]},\tilde{x}_{[i]}^TP_{i}\bar{B}_i\bar{x}_{[i]})) = \\
               \begin{cases}
               0, \hspace{65mm} $when$ \hspace{1.5mm} g(\theta) < 0 \hspace{1.5mm} $since$ \hspace{1.5mm} Proj(\dot{\hat{\theta}}_{[i]},\tilde{x}_{[i]}^TP_{i}\bar{B}_i\bar{x}_{[i]})) = - \tilde{x}_{[i]}^TP_{i}B\bar{x}_{[i]}) \\
               0, \hspace{65mm} $when$ \hspace{1.5mm} g(\theta) < 0 \hspace{1.5mm} $and$ \hspace{1.5mm} \nabla g^T(-\tilde{x}_{[i]}^TP_{i}\bar{B}_i\bar{x}_{[i]})\leq 0 \\
               \frac{\overbrace{(\hat{\theta}_{[i]}-\theta_{[i]}^*)\nabla g}^{\leq 0}\overbrace{\nabla g^T(-\tilde{x}_{[i]}^TP_{i}\bar{B}_i\bar{x}_{[i]})}^{> 0}\overbrace{g(\theta)}^{\geq 0}}{||\nabla g||^2}, \hspace{10mm} $when$ \hspace{1.5mm} g(\theta) \geq 0 \hspace{1.5mm} $and$ \\ \hspace{70mm} \nabla g^T(-\tilde{x}_{[i]}^TP_{i}\bar{B}_i\bar{x}_{[i]})> 0
               \label{eqn:projection2}
               \end{cases}
    \end{aligned}
    \end{equation}
    
    Ultimately, from (\ref{eqn:projection2}) it is seen that the projection operator continuously modifies the uniformly bounded adaptive law to maintain a negative semi-definite Lyapunov function derivative i.e. the left-hand-side term of (\ref{eqn:projection2}) is always less than or equal to zero in order to ensure the derivative of (\ref{eqn:LyapVi}) yields (\ref{eq:Vdot6}). 
   
    \subsection{Algorithm for Bisection Method}\label{sec:7.5}
    Summarising \cite{Byers1988,Aboky2002}, the bisection method is used to compute $\sigma_{min}(\hat{A}_{m}-j\omega \mathbb{I}_{n_i\textrm{x}n_i})$. The algorithm is:
               \begin{algorithm}
                   \caption{Bisection method for computing distance from stable to unstable poles}
                   \textbf{Require:} $\sigma_{min}(\hat{A}_{m}-j\omega \mathbb{I}_{n_i\textrm{x}n_i}) > \sqrt{N_i\Xi_{i}^2}$\\
                   \textbf{Input:} matrix $\hat{A}_m$, tolerance $\tau>0$\\ 
                   \textbf{Output:} lower-bound $\alpha$, upper-bound $\gamma$\\
                   \textbf{Initialisation} $\alpha = 0$, $\gamma = ||\hat{A}_m||_2, N = \frac{log_{2}\gamma}{2^\tau}$\\
                   \textbf{For} $i = 1:N$\\
                      $\sigma = \frac{\alpha+\gamma}{2}$\\
                      $\mathcal{H}_{i}(\sigma) = \left[ \begin{array}{cc}\\
                     \hat{A}_{m} & N_i\mathbb{I}_{n_i\textrm{x}n_i}\\
                     -\sigma^2\mathbb{I}_{n_i\textrm{x}n_i} & -\hat{A}_{m}^T\\
                     \end{array} \right]$\hspace{10mm}{define the Hamiltonian matrix}\\
                     \textbf{if} $\lambda_{min}(\mathcal{H}_{i}(\sigma)) \in \mathbb{R}^- \leq \tau $ \hspace{23mm}if minimum stable eigenvalue of $\mathcal{H}_{i}(\sigma)\leq\tau$\\
                     $\alpha = \sigma$
                    \textbf{else} \\
                     $\gamma = \sigma$\\
                      \textbf{end}
               \end{algorithm} 

 \subsection{Can typical local damping features improve global stability?}\label{sec:7.6}
               It is a common design feature in power electronics to increase the parasitic or equivalent series resistance (ESR) of each subsystem's output capacitor in order to improve damping. Section 
               6.3 of \cite{OKeeffe2018e} derives the state-space model which includes the capacitor ESR. The interconnection matrix is written as,
               
               \begin{equation}
               \hat{\zeta}_{[i]}(t) = \sum_{j\in \mathcal{N_i}}\left[ \begin{array}{ccc}
                                       0 & -\frac{(1-D_i)R_{ci}}{R_{ij}L_{ti}} & 0\\
                                       0 & \frac{1}{R_{ij}C_{t1}} & 0 \\
                                       0 & 0 & 0 \\
                                       \end{array} \right] \hat{x}_{[j]}(t)
               \label{eqn:tildeMod}
               \end{equation} 
               
             Though the ESR element is seen in the interconnection model, it does not influence it, either by introducing an additional eigenvalue or by nullifying the $\frac{1}{R_{ij}C_{ti}}$ term. Ultimately, despite being able to influence the dynamics and stability of the local subsystem, the ESR cannot help improve or relax global stability conditions.

     \bibliography{arXiv_Paper_TAC}

\end{document}